\newtheorem{theorem}{Theorem}[section]
\newtheorem{lemma}{Lemma}[section]
\newenvironment{proof}{\noindent{\bf Proof:}}{\hfill\fbox{}\vspace*{1mm}}
\newtheorem{prop}[theorem]{\bf Proposition}
\newtheorem{coro}[theorem]{\bf Corollary}
\newtheorem{remark}[theorem]{\bf Remark}
\newtheorem{defn}[theorem]{\bf Definition}
\newcommand{\ulxi} {{\underline \xi}}
\newcommand{\ulx} {{\underline x}}
\newcommand{\uly} {{\underline y}}
\newcommand{\bfe} {{\mathbf e}}
\begin{document}
\title{Fourier Spectrum of Clifford $H^{p}$ Spaces on $\mathbf{R}^{n+1}_+$ for $1\leq p \leq \infty$\let\thefootnote\relax\footnotetext{This work was supported by the Science and Technology Development Fund, Macao SAR: 0006/2019/A1, 154/2017/A3; NSFC Grant No. 11701597; NSFC Grant No. 11901594; The Science and Technology Development Fund, Macao SAR: 0123/2018/A3.}}
\author{Pei Dang, Weixiong Mai\thanks{Corresponding author}, Tao Qian}

\date{}
\maketitle
\begin{center}
\begin{minipage}{120mm}
\begin{center}{\bf Abstract}\end{center}
{This article studies the Fourier spectrum characterization of functions in the Clifford algebra-valued Hardy spaces $H^p(\mathbf R^{n+1}_+), 1\leq p\leq \infty.$ Namely, for $f\in L^p(\mathbf R^n)$, Clifford algebra-valued, $f$ is further the non-tangential boundary limit of some function in $H^p(\mathbf R^{n+1}_+),$ $1\leq p\leq \infty,$ if and only if $\hat{f}=\chi_+\hat{f},$ where $\chi_+(\underline{\xi})=\frac{1}{2}(1+i\frac{\ulxi}{|\ulxi|}),$ the Fourier transformation and the above relation are suitably interpreted (for some cases in the distribution sense). These results further develop the relevant context of Alan McIntosh. As a particular case of our results, the vector-valued Clifford Hardy space functions are identical with the conjugate harmonic systems in the work of Stein and Weiss. The latter proved the corresponding singular integral version of the vector-valued cases for $1\leq p<\infty.$ We also obtain the generalized conjugate harmonic systems for the whole Clifford algebra-valued Hardy spaces rather than only the vector-valued cases in the Stein-Weiss setting.

{\bf Key words}: Hardy space, Monogenic Function, Fourier Spectrum, Riesz Transform,  Clifford Algebra, Conjugate Harmonic System \\
}
\end{minipage}
\end{center}
\begin{center}
{\large \it In memory of Alan McIntosh}
\end{center}

\section{Introduction}
The classical Paley-Wiener Theorem
  asserts that for a $L^2(\mathbf R)$-function $f$, scalar-valued, it is further the non-tangential boundary limit (NTBL) of a function in the Hardy $H^2$ space in the upper half plane if and only if the Fourier transform of $f$, denoted by $\hat{f}$, satisfies the relation $\hat{f}=\chi_+\hat{f},$  where $ \chi_+$ is the characteristic (indicator) function of the set $(0,\infty),$ that takes the value $1$ when its argument is in $(0,\infty)$ and otherwise zero. This amounts to saying that a characteristic property of boundary limit functions of the Hardy space functions is that their Fourier transforms vanish at the negative spectra. This classical Fourier spectrum characterization of the Hardy $H^2$ space functions has been studied and generalized by different authors (see, for instance, \cite{Duren,Garnett}). Among recent studies \cite{Q1} and \cite{QXYYY} give a throughout treatment to the analogous results for $L^p(\mathbf R)$ for $p\in [1,\infty]$. Those papers prove that if $f$ is a function in $L^p(\mathbf R), 1\leq p\leq \infty,$ then $f$ is the NTBL of some function in the Hardy $H^p$ space in the upper half plane if and only if the Fourier transform $\hat{f}$ satisfies the relation $(\hat{f}, \psi)=0$ for $\psi$ being any function in the Schwartz class whose support lays in the closure of left half of the real line. We note that for $2<p\leq \infty$ this last statement is interpreted as that $\hat{f}$, as a distribution, is supported in $[0,\infty)$. The Fourier spectrum characterization results have implications to the Hilbert transform characterizations of the Hardy space functions, as well as to Hardy space decompositions of $L^p$ functions, the latter being through the Fourier spectrum decomposition. We note that the Hardy spaces decomposition can also be extended to the $L^p$ spaces for $0<p<1$ (\cite{Deng-Qian}) although there do not exist spectrum decomposition results.

In higher dimensional Euclidean spaces there exist analogous results. The above mentioned Fourier spectrum, Hilbert transformations, and the Hardy space decompositions are all based on the Cauchy type complex structure associated with the underlying domain on which the Hardy space functions are defined. In $\mathbf R^n$, $n>2$, there are two distinguished complex structures, of which one is several complex variables and the other is Clifford algebra. Both those complex structures in relation to their respective Hardy spaces are treated in \cite{SW} and \cite{Stein2}. The several complex variables setting corresponds to the Hardy spaces on tubes. The Clifford algebra setting corresponds to the conjugate harmonic systems.

Fourier spectrum properties of Hardy spaces on tubes were first studied in \cite{SW} and \cite{Stein2} with the restriction on $p=2.$ Certain one-way results for $H^p(T_\Omega)$ and a partial range of the space index $p$, where $\Omega$ is an irreducible symmetric cone and $T_{\Omega}=\{x+iy\in \mathbf C^n; x\in \mathbf R^n,y\in\Omega\subset \mathbf R^n\}$, were obtained in \cite{Garrigos}.
In \cite{Hor} H\"ormander proved some results corresponding to the type of Paley-Wiener Theorem for bandlimited functions involving entire functions in several complex variables. Fourier spectrum characterizations of the Hardy spaces on tubes for all cases $1\leq p\leq \infty$ are thoroughly studied in \cite{Li-Deng-Qian}.

The present paper gives Fourier spectrum characterizations for functions in the Clifford algebra-valued Hardy spaces for the whole range $p\in [1,\infty]$. As a particular case, the vector-valued case corresponding to the conjugate harmonic systems was previously and fundamentally studied in \cite{Stein2,SW}, and further in \cite{Gilbert-Murray,Mitrea,Kou-Qian}. The previous studies, besides the restriction to vector-values, were also restricted to the singular integral version, and the index range is restricted to $1\leq p<\infty$. The main results of this study imply the Hilbert transformation eigenvalue characterizations of the Hardy spaces and the Hardy spaces decompositions of the $L^p$ functions.

The crucial notion with the Clifford algebra setting of the Euclidean spaces is the projection functions $\chi_\pm$ defined by

\[ \chi_\pm (\ulxi)=\frac{1}{2}\left(1\pm i\frac{\ulxi}{|\ulxi|}\right)\]

\noindent and the associated generalizations of the trigonometrical exponential function

\[ e^\pm (x, \underline{\xi})=e^{2\pi i \langle \underline{x}, \underline{\xi}\rangle}e^{\mp 2\pi x_0|\underline{\xi}|}\chi_\pm (\ulxi),
\]
where $x=x_0+\ulx.$

The purpose of this paper is to declare the Fourier multiplier form of the Clifford Cauchy integral representation formula for the Clifford algebra-valued Hardy $H^p$ functions $F$ in the upper-half space $\mathbf R^{n+1}_+$ for all $p\in [1,\infty]$:

\[ F(x_0+\underline{x})=\int_{\mathbf R^n}e^{2\pi i \langle\underline{x},\underline{\xi}\rangle}e^{-2\pi x_0|\underline{\xi}|}\chi_+(\underline{\xi})\hat{F}(\underline{\xi})d\underline{\xi}.
\]
The above formula is also the Laplace transform of functions on $\mathbf R^n$ in the Clifford algebra setting provided that the formula makes sense.
We note that when $1\leq p\leq 2,$ the above relation is valid in the Lebesgue integration sense, while when $p>2$ it is valid in the distribution sense. We will prove that, for all $p\in [1,\infty],$ a Clifford algebra-valued $L^p(\mathbf R^n)$-function $F$ satisfies $F\in H^p(\mathbf R^{n+1}_+)$ (the NTBL of some Clifford algebra-valued Hardy space function) if and only if $\hat F=\chi_+\hat F,$ where the multiplication between $\chi_+$ and the distribution $\hat F$ will be precisely defined in the rest part of the paper; and $F\in H^p(\mathbf R^{n+1}_+)$ if and only if $HF=F,$ where $H=-\sum_{k=1}^n \bfe_k R_k$ is the Hilbert transformation, and $R_k,k=1,...,n$ are the Riesz transformations. For the Clifford algebra-valued Hardy spaces in the lower-half of the space $\mathbf R^{n+1}$ we have the counterpart results $\hat F=\chi_-\hat F;$ and $HF=-F.$

 In \cite{Stein2} (see also Propositions \ref{stein p65}-\ref{stein p221 th6}) the characterization by the Riesz transformations of the NTBLs of the vector-valued Hardy spaces functions, or alternatively the conjugate harmonic systems, for $1\leq p<\infty,$ is proved.
 When $p=2$, the corresponding Fourier spectrum characterization of the vector-valued functions in $H^2(\mathbf R^{n+1}_+)$ can be directly derived from Theorem 3.1 in \cite[page 65]{Stein2} (see also Proposition \ref{stein p65}) through the relation $\hat F=(\hat f_0, (R_1(f_0))^\wedge,...,(R_n(f_0))^\wedge)=(\hat f_0, -i\frac{\xi_1}{|\ulxi|}\hat f_0,...,-i\frac{\xi_n}{|\ulxi|}\hat f_0).$ Our study systematically treats the Fourier multiplier aspect for the Clifford algebra-valued functions case, and for the whole index range $1\leq p\leq \infty,$ involving distributional and $BMO$ functional analysis.

The paper is organized as follows. In \S 2 some notations and terminologies are given. In \S 3 we prove the main results. In \S 4, as an application of the Fourier spectrum of $H^p(\mathbf R^{n+1}_+)$, we prove the analogous result in the Clifford algebra-valued Bergman spaces on $\mathbf R^{n+1}_+$.

\section{Preliminaries}
 \subsection { $\mathcal H^{p}$ space in terms of conjugate harmonic systems}
Let $\mathbf R_+^{n+1}=\{x=(\ulx,x_0)\in \mathbf R^{n+1};x_0>0,\ulx\in \mathbf R^n\},$ and $u_j,0\leq j\leq n,$ be functions defined on $\mathbf R^{n+1}_+.$ Suppose $F=(u_0,u_1,\ldots,u_n)$ satisfies the generalized Cauchy-Riemann systems in $\mathbf{R}_+^{n+1}$, i.e.,
\begin{align}\label{GCR}
\begin{split}
 \sum_{j=0}^n\frac{\partial u_j}{\partial x_j}=0, &\\
\frac{\partial u_j}{\partial x_k}=\frac{\partial u_k}{\partial x_j}, & \quad j\neq k, 0\leq j,k\leq n.
\end{split}
\end{align}
Such $(n+1)$-tuple $F=(u_0,u_1,\ldots,u_n)$ is called a conjugate harmonic system (\cite{Stein2,SW}).
 For $0<p<\infty,$ we say that $F\in \mathcal H^p(\mathbf R^{n+1}_+),$ the vector-valued monogenic $p$-Hardy space, if $F$ satisfies (\ref{GCR}), and moreover,
\begin{eqnarray}\label{hp bounded condition}
||F||_p=\sup\limits_{x_0>0}\left[\int_{\mathbf{R}^n}|F(\ulx,x_0)|^pd\ulx\right]^{\frac{1}{p}}<\infty.
\end{eqnarray}
For $p=\infty,$ we say that $F\in \mathcal H^\infty(\mathbf R^{n+1}_+)$ if $F$ satisfies (\ref{GCR}), and
\begin{eqnarray}\label{hp bounded condition}
||F||_\infty=\sup\{|F(\ulx,x_0)| \ : \ (\ulx,x_0)\in\mathbf R^{n+1}_+ \}<\infty.
\end{eqnarray}
$||F||_p$ is a norm when $1\leq p\leq\infty,$ and a Hilbert space norm when $p=2$ (see c.f. \cite[page 220]{Stein2}).  For $\mathcal H^p(\mathbf R^{n+1}_+),$ the following results for harmonic functions are well-known.
 \begin{prop}[{\cite[ page 78]{Stein2}}]\label{stein p78}
Let $u(\ulx,x_0)$ be harmonic in $\mathbf{R}_+^{n+1}.$\\
\noindent One has:\\
{\rm (a)} If $1<p\leq\infty, $ $u(\ulx,x_0)$ is the Poisson integral of an $L^p(\mathbf{R}^n)$ function if and only if $\sup\limits_{x_0>0}\|u(\ulx,x_0)\|_p<\infty.$

\noindent{\rm (b)} $u(\ulx,x_0)$ is the Poisson integral of a Borel measure if and only if $\sup\limits_{x_0>0}\|u(\ulx,x_0)\|_1<\infty.$
\end{prop}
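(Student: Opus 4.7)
The ``only if'' directions in both (a) and (b) are immediate from Fubini and the fact that the Poisson kernel satisfies $\|P_{x_0}\|_1 = 1$: any Poisson integral satisfies $\|u(\cdot, x_0)\|_p \leq \|f\|_p$ or $\|u(\cdot, x_0)\|_1 \leq \|\mu\|$ uniformly in $x_0 > 0$. The substantive content is the converse, and my plan is to use weak-$*$ compactness to extract a candidate boundary datum and then identify it via the semigroup property of the Poisson kernel.

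Concretely, fix a sequence $\e_k \downarrow 0$. Under hypothesis (a) with $1 < p < \infty$, the family $\{u(\cdot, \e_k)\}$ is bounded in the reflexive Banach space $L^p(\mathbf{R}^n)$, so a subsequence converges weakly to some $f \in L^p(\mathbf{R}^n)$. For $p = \infty$, Banach--Alaoglu applied to $L^\infty = (L^1)^*$ yields a weak-$*$ subsequential limit $f \in L^\infty(\mathbf{R}^n)$. For case (b), the family $\{u(\ulx, \e_k)\,d\ulx\}$ is bounded in $C_0(\mathbf{R}^n)^*$, so a weak-$*$ subsequential limit $\mu$ exists in the space of finite Borel measures.

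To identify the limit with the boundary datum of $u$, I establish the semigroup identity
\[
u(\ulx, x_0 + t) = (P_{x_0} * u(\cdot, t))(\ulx), \qquad x_0, t > 0,
\]
by applying a uniqueness theorem for the Dirichlet problem to the harmonic function $w(\ulx, s) := u(\ulx, s+t) - (P_s * u(\cdot, t))(\ulx)$, which has vanishing boundary trace as $s \to 0$ (by the approximate-identity property of $P_s$ against $u(\cdot, t) \in L^p$) and the requisite control at infinity inherited from the uniform norm bound. Setting $t = \e_{k_j}$ along the convergent subsequence and pairing against the test function $P_{x_0}(\ulx - \cdot)$, which lies in $L^{p'}(\mathbf{R}^n)$ in case (a) and in $C_0(\mathbf{R}^n)$ in case (b), I pass to the limit to obtain $u(\ulx, x_0) = (P_{x_0} * f)(\ulx)$, respectively $(P_{x_0} * \mu)(\ulx)$.

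The main obstacle I anticipate is carefully justifying the semigroup identity in the borderline cases $p = \infty$ and (b), where $s \mapsto u(\cdot, s)$ is not strongly continuous into $L^\infty$ or into the measure space; the boundary trace must be interpreted in the weak-$*$ sense, and one must verify that the classical uniqueness argument for bounded harmonic functions still applies after localizing via the sub-mean-value or Phragm\'en--Lindel\"of type estimates available from the $\sup_{x_0} \|u(\cdot, x_0)\|_p$ hypothesis.
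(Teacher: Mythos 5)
This proposition is quoted in the paper without proof (it is cited from Stein's \emph{Singular Integrals and Differentiability Properties of Functions}, p.~78), and your argument --- trivial direction by $\|P_{x_0}\|_1=1$, converse by weak/weak-$*$ compactness in $L^p$, $L^\infty=(L^1)^*$, or $C_0(\mathbf R^n)^*$, identification via the semigroup identity $u(\cdot,x_0+t)=P_{x_0}*u(\cdot,t)$ proved by Dirichlet-problem uniqueness, then pairing with $P_{x_0}(\ulx-\cdot)$ --- is exactly the classical proof in that reference, and it is correct. The borderline difficulty you flag for $p=\infty$ and for measures is not actually an issue: the interior estimate $|u(\ulx,x_0)|\leq C x_0^{-n/p}\sup_{t>0}\|u(\cdot,t)\|_p$ (from the mean-value property and H\"older) shows that for each fixed $t>0$ the slice $u(\cdot,t)$ is bounded and continuous, so the uniqueness theorem for bounded harmonic functions applies classically to $w(\ulx,s)=u(\ulx,s+t)-(P_s*u(\cdot,t))(\ulx)$ with no weak-$*$ interpretation of the boundary trace needed.
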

\begin{prop}[{\cite[Theorem 1.3, page 62]{Deng-Han}}]
For $f\in L^p(\mathbf R^n),1\leq p\leq \infty,$ if $u(\ulx,x_0)$ is the Poisson integral of $f$, then\\
{\rm (a)} $u^*(\ulx)=\sup_{|\uly-\ulx|<\alpha x_0}|u(\uly,x_0)|\leq A\mathcal M(f)(\ulx),$ where $\alpha>0,$ $\mathcal M(f)$ is the Hardy-Littlewood maximal function of $f.$\\
{\rm (b)} For almost all $\ulx\in \mathbf R^n,$ one has
\begin{align*}
\lim_{(\uly,x_0)\to (\ulx,0),|\uly-\ulx|<\alpha x_0}u(\uly,x_0)=f(\ulx).
\end{align*}
\end{prop}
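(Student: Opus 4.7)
The plan is to establish (a) via a pointwise domination of the Poisson kernel along non-tangential cones, reducing the problem to a standard maximal inequality for radially decreasing approximations to the identity, and then derive (b) from (a) by a density argument combined with the weak-type behavior of $\mathcal{M}$.

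For (a), I would begin by writing $u(\uly,x_0) = (P_{x_0}*f)(\uly)$, where $P_{x_0}(\ulz) = c_n x_0/(|\ulz|^2+x_0^2)^{(n+1)/2}$ is the Poisson kernel on $\mathbf R^{n+1}_+$. The crucial geometric observation is that whenever $|\uly-\ulx|<\alpha x_0$, the shifted kernel $P_{x_0}(\uly-\cdot)$ is pointwise dominated by a constant $C_\alpha$ (depending only on $\alpha$ and $n$) times $P_{x_0}(\ulx-\cdot)$. This is an elementary comparison of $|\uly-\ulz|^2+x_0^2$ with $|\ulx-\ulz|^2+x_0^2$, split by whether $|\ulx-\ulz|\leq 2\alpha x_0$ or $|\ulx-\ulz|>2\alpha x_0$. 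Since $P_1$ is a non-negative, radially symmetric, decreasing $L^1$ function with $\|P_1\|_1=1$, the classical bound $\sup_{x_0>0}|(P_{x_0}*g)(\ulx)|\leq\mathcal M(g)(\ulx)$ applied with $g=|f|$ combines with the kernel comparison to give $u^*(\ulx)\leq A\mathcal M(f)(\ulx)$ with $A=C_\alpha$.

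For (b), the plan is a three-stage density argument. First, for $f\in C_c(\mathbf R^n)$, the uniform continuity of $f$ and the fact that $P_{x_0}$ is an approximation to the identity yield non-tangential convergence $u(\uly,x_0)\to f(\ulx)$ at every point $\ulx$, by a standard splitting of the convolution into the region near $\ulx$ and its complement. Second, for $1\leq p<\infty$, I would choose $g_k\in C_c(\mathbf R^n)$ with $\|f-g_k\|_p\to 0$ and set
\[
\Lambda(\ulx)=\limsup_{(\uly,x_0)\to(\ulx,0),\,|\uly-\ulx|<\alpha x_0}|u(\uly,x_0)-f(\ulx)|.
\]
The triangle inequality and part (a) applied to $f-g_k$ give $\Lambda(\ulx)\leq A\mathcal M(f-g_k)(\ulx)+|f(\ulx)-g_k(\ulx)|$. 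The weak-type $(p,p)$ (or weak $(1,1)$) bound for $\mathcal M$ and Chebyshev's inequality then force $|\{\Lambda>\epsilon\}|=0$ for every $\epsilon>0$, whence $\Lambda=0$ almost everywhere.

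The main obstacle is the endpoint $p=\infty$, because $C_c(\mathbf R^n)$ is not dense in $L^\infty(\mathbf R^n)$, so the preceding argument breaks down. I would resolve this by localization: fix a ball $B=B(\ulx_0,R)$ and write $f=f\chi_{2B}+f\chi_{(2B)^c}$. Since $f\chi_{2B}\in L^1(\mathbf R^n)$, the $p=1$ case already furnishes a.e.\ non-tangential convergence of its Poisson integral on $B$ to $f\chi_{2B}$, which equals $f$ on $B$. For the tail $f\chi_{(2B)^c}$, a direct estimate shows its Poisson integral converges to $0$ uniformly on $B/2$ as $x_0\to 0^+$ within any non-tangential cone, since $|\uly-\ulz|$ is bounded below for $\ulz\in(2B)^c$ and $\uly$ near $B/2$. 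Covering $\mathbf R^n$ by countably many such balls completes the argument.
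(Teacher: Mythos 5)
The paper does not prove this proposition at all: it is quoted from the reference of Deng--Han as a known background result, so there is no internal proof to compare with. Your argument is the standard one and is correct: the cone-to-vertex comparison $P_{x_0}(\uly-\cdot)\leq C_\alpha P_{x_0}(\ulx-\cdot)$ for $|\uly-\ulx|<\alpha x_0$ combined with the radially decreasing majorant bound gives (a); the $C_c(\mathbf R^n)$-density scheme with the weak-type estimate for $\mathcal M$ gives (b) for $1\leq p<\infty$; and your localization $f=f\chi_{2B}+f\chi_{(2B)^c}$ handles the genuine obstacle at $p=\infty$, since the tail's Poisson integral vanishes uniformly on the smaller ball because the kernel mass outside a fixed radius tends to $0$ as $x_0\to 0^+$.
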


 \begin{prop}[{\cite[page 65]{Stein2}}]\label{stein p65}
Let $f_0$ and $f_1,\ldots,f_n$ belong to $L^2(\mathbf{R}^n),$ and let their respective Poisson integrals be $u_0(\ulx,x_0)=P_{x_0}*f_0, u_1(\ulx,x_0)=P_{x_0}*f_1,\ldots,u_n(\ulx,x_0)=P_{x_0}*f_n.$ Then a necessary and sufficient condition that \[f_j=R_j(f_0),\ j=1,\ldots,n,\]
is that $(u_0,...,u_n)$ satisfies the generalized Cauchy-Riemann equations (\ref{GCR}), where $R_j$ is the $j$-th Riesz transformation, that is,
\begin{align}\label{Riesz-transform}
f_j(\ulx)=R_j(f_0)(\ulx)=\lim_{\epsilon\to 0}\frac{1}{\sigma_n}\int_{|\ulx-\uly|>\epsilon}\frac{x_j-y_j}{|\ulx-\uly|^{n+1}}f_0(\uly)d\uly
\end{align}
with $\sigma_n=\frac{\pi^\frac{n+1}{2}}{\Gamma(\frac{n+1}{2})}.$
\end{prop}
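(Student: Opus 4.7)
The plan is to transfer everything to the Fourier side, which is the natural arena since the hypothesis places all the $f_j$ in $L^2(\mathbf R^n)$. For $f\in L^2(\mathbf R^n)$, the partial Fourier transform in $\ulx$ of its Poisson integral is
\[
\widehat{P_{x_0}\ast f}(\ulxi,x_0)=\hat f(\ulxi)\,e^{-2\pi x_0|\ulxi|},
\]
so each $u_j$ is smooth and harmonic in $\mathbf R^{n+1}_+$; moreover $\partial_{x_k}$ corresponds on the Fourier side to multiplication by $2\pi i\xi_k$ for $1\le k\le n$, and $\partial_{x_0}$ to multiplication by $-2\pi|\ulxi|$. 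The condition $f_j=R_j(f_0)$ is in turn the Fourier multiplier identity $\hat f_j(\ulxi)=-i\xi_j|\ulxi|^{-1}\hat f_0(\ulxi)$.

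For the necessary direction, I would insert that multiplier identity into each of the generalized Cauchy-Riemann equations and watch it collapse on the Fourier side. The divergence equation becomes
\[
\Bigl(-2\pi|\ulxi|+\sum_{j=1}^{n}2\pi i\xi_j\cdot(-i\xi_j/|\ulxi|)\Bigr)\hat f_0(\ulxi)\,e^{-2\pi x_0|\ulxi|}=0,
\]
the purely spatial symmetry relations $\partial_{x_k}u_j=\partial_{x_j}u_k$ for $1\le j,k\le n$ reduce to the obviously symmetric $2\pi\xi_j\xi_k\hat f_0(\ulxi)/|\ulxi|$ on both sides, and the mixed relations with $x_0$ both equal $2\pi i\xi_k\hat f_0(\ulxi)\,e^{-2\pi x_0|\ulxi|}$.

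For the sufficient direction, I would isolate a single family of equations that already determines the $f_j$, namely $\partial u_j/\partial x_0=\partial u_0/\partial x_j$ for $1\le j\le n$. Taking Fourier transforms in $\ulx$ yields
\[
-2\pi|\ulxi|\,\hat f_j(\ulxi)\,e^{-2\pi x_0|\ulxi|}=2\pi i\xi_j\,\hat f_0(\ulxi)\,e^{-2\pi x_0|\ulxi|}
\]
for every $x_0>0$. Cancelling the strictly positive exponential and rearranging gives $\hat f_j(\ulxi)=-i\xi_j|\ulxi|^{-1}\hat f_0(\ulxi)$ for almost every $\ulxi$, which by Plancherel is exactly $f_j=R_j(f_0)$.

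The main technical point to justify is that the $x_0$-derivative and $\ulx$-derivatives of $u_j$ commute with the Fourier transform in $\ulx$; this is handled by observing that for any fixed $x_0>0$ the factor $e^{-2\pi x_0|\ulxi|}$ places $\hat u_j(\cdot,x_0)$ in every weighted $L^2$ space, so differentiation under the integral is justified by dominated convergence. Since the whole argument is a clean $L^2$ Fourier computation I do not foresee any deeper obstacle; in fact the proposition serves as a prototype for the Clifford algebra-valued and $p\neq 2$ extensions pursued later in the paper, where the genuine difficulty begins, arising from the absence of Plancherel and the need to interpret $\chi_+\hat F$ distributionally.
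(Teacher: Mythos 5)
Your argument is correct: passing to the partial Fourier transform in $\ulx$, using $\widehat{P_{x_0}\ast f}(\ulxi)=\hat f(\ulxi)e^{-2\pi x_0|\ulxi|}$ together with the multiplier $-i\xi_j/|\ulxi|$ of $R_j$, and justifying differentiation under the integral via the exponential decay, settles both directions (the sufficiency needing only the equations $\partial u_j/\partial x_0=\partial u_0/\partial x_j$). The paper itself offers no proof --- the proposition is quoted from Stein's \emph{Singular Integrals and Differentiability Properties of Functions}, page 65 --- and your Fourier-multiplier computation is essentially the classical argument given there.
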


\begin{prop}[{\cite[page 220]{Stein2}}]\label{stein p220}
Suppose that $F\in \mathcal H^p(\mathbf R^{n+1}_+),1<p<\infty.$ Then there exist $f_0,f_1,f_2,\ldots, f_n,$ each in $L^p(\mathbf{R}^n),$ so that $u_j(x,x_0)$ is the Poisson integral of $f_j,j=0,\ldots,n.$ Also $f_j=R_j(f_0),$ $R_1,R_2, \ldots,R_n$ are the Riesz transformations. Conversely, suppose $f_0\in L^p(\mathbf{R}^n),$ and let $f_j=R_j(f_0),$ and $u_j(x,x_0)$ be the Poisson integrals of $f_j,j=0,\ldots,n.$ Then $F=(u_0,u_1,\ldots,u_n)\in \mathcal H^p(\mathbf R^{n+1}_+);$ moreover $\|f_0\|_{L^p}\leq\|F\|_p\leq A_p\|f_0\|_{L^p}.$
\end{prop}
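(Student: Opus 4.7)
The plan is to prove both directions separately, using Proposition \ref{stein p78} to identify harmonic functions as Poisson integrals, Proposition \ref{stein p65} for the $L^2$ case of the Riesz-transform identity, and the $L^p$-boundedness of the Riesz transforms from Calder\'on-Zygmund theory for $1<p<\infty$.

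For the forward direction, I first note that differentiating the generalized Cauchy-Riemann system (\ref{GCR}) shows each $u_j$ is harmonic on $\mathbf{R}^{n+1}_+$, and since $|u_j(\ulx,x_0)|\leq|F(\ulx,x_0)|$ pointwise, one has $\sup_{x_0>0}\|u_j(\cdot,x_0)\|_{L^p}\leq\|F\|_p<\infty$. Proposition \ref{stein p78}(a) then furnishes $f_j\in L^p(\mathbf{R}^n)$ with $u_j=P_{x_0}*f_j$. To identify $f_j=R_j(f_0)$, I would apply a translation-and-approximation argument: for fixed $\epsilon>0$, the shifted field $F(\cdot,\cdot+\epsilon)$ still satisfies (\ref{GCR}) and each slice $u_j(\cdot,\epsilon)=P_\epsilon*f_j$ is smooth, bounded, and belongs to $L^p$. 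Truncating and mollifying to reduce to an $L^p\cap L^2$ situation, one can invoke Proposition \ref{stein p65} together with the Fourier symbol $-i\xi_j/|\ulxi|$ of $R_j$ to obtain $u_j(\cdot,\epsilon)=R_j(u_0(\cdot,\epsilon))$, then let the truncation parameter go to infinity using $L^p$-continuity of $R_j$, and finally let $\epsilon\to 0^+$ in $L^p$ to transfer the identity to $f_j=R_j(f_0)$.

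For the converse direction, given $f_0\in L^p(\mathbf{R}^n)$, Calder\'on-Zygmund theory yields $f_j:=R_j(f_0)\in L^p$ with $\|f_j\|_{L^p}\leq A_{p,j}\|f_0\|_{L^p}$. Setting $u_j:=P_{x_0}*f_j$, one verifies (\ref{GCR}) first for Schwartz data via the Fourier identities $\widehat{R_j g}(\ulxi)=-i(\xi_j/|\ulxi|)\hat g(\ulxi)$ and $\widehat{P_{x_0}*g}(\ulxi)=e^{-2\pi x_0|\ulxi|}\hat g(\ulxi)$, which render the system an algebraic identity in frequency; density of Schwartz functions in $L^p$ together with continuity of $R_j$ on $L^p$ and uniform convergence of Poisson integrals on compact subsets of $\mathbf{R}^{n+1}_+$ extends (\ref{GCR}) to general $f_0$. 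The two-sided norm estimate then follows from $\|f_0\|_{L^p}=\lim_{x_0\to 0^+}\|u_0(\cdot,x_0)\|_{L^p}\leq\|F\|_p$ and $\|F\|_p\leq\sum_{j=0}^n\|u_j\|_p\leq\bigl(1+\sum_{j=1}^n A_{p,j}\bigr)\|f_0\|_{L^p}$. The main obstacle is the forward direction when $p\neq 2$, where the $L^2$ Fourier argument underlying Proposition \ref{stein p65} is not directly available; the slicing-mollification scheme must be executed so that the principal-value Riesz integral (\ref{Riesz-transform}) is preserved under all limiting procedures, which requires careful interchange of the limits $\epsilon\to 0^+$, the truncation radius, and the Cauchy principal value defining $R_j$.
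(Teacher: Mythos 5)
First, note that the paper itself offers no proof of this proposition: it is quoted verbatim from \cite[page 220]{Stein2}, so you are being measured against the classical argument rather than anything in the text. Your converse direction (Fourier verification of (\ref{GCR}) for Schwartz data plus density, $L^p$-boundedness of $R_j$, and the two norm inequalities) is sound, and your forward direction is also fine for $1<p\leq 2$ --- indeed there you need no truncation at all, since $u_j(\cdot,\epsilon)=P_\epsilon*f_j\in L^p\cap L^\infty\subset L^2$, the semigroup property gives $u_j(\cdot,x_0+\epsilon)=P_{x_0}*u_j(\cdot,\epsilon)$, and Proposition \ref{stein p65} applies directly to the shifted system. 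The genuine gap is the forward direction for $2<p<\infty$. There $P_\epsilon*f_j$ lies only in $L^r$ for $r\geq p$, not in $L^2$, and your proposed ``truncate and mollify to reduce to an $L^p\cap L^2$ situation'' cannot be fed into Proposition \ref{stein p65}: that proposition needs the Poisson integrals of the \emph{modified} data to satisfy (\ref{GCR}), but multiplying the slices by a cutoff destroys the conjugate-harmonic-system structure (the Poisson integrals of the truncated tuple no longer satisfy (\ref{GCR})), while mollification preserves (\ref{GCR}) but does not improve integrability from $L^p$ to $L^2$. So the key identity $u_j(\cdot,\epsilon)=R_j(u_0(\cdot,\epsilon))$ is simply not established for $p>2$; the ``careful interchange of limits'' you defer to is precisely the missing argument, and no ordering of the limits rescues the truncation scheme, because the hypothesis of Proposition \ref{stein p65} fails at every truncated stage.

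The standard way to close this hole avoids $L^2$ altogether and uses uniqueness of conjugates. Fix $\epsilon>0$, put $g_j=R_j\bigl(u_0(\cdot,\epsilon)\bigr)\in L^p$ and $v_j=P_{x_0}*g_j$. By your own converse direction, $\bigl(u_0(\cdot,\cdot+\epsilon),v_1,\ldots,v_n\bigr)$ satisfies (\ref{GCR}), as does the shifted field $F(\cdot,\cdot+\epsilon)$; hence $w_0\equiv 0$, $w_j=u_j(\cdot,\cdot+\epsilon)-v_j$ ($1\leq j\leq n$) again satisfies (\ref{GCR}). Then $\partial_0 w_j=\partial_j w_0=0$ shows each $w_j$ is independent of $x_0$, hence a harmonic function of $\ulx$ alone whose $L^p(\mathbf R^n)$ norm is finite (uniform slice bounds for $u_j(\cdot,\cdot+\epsilon)$ and $v_j$), and the mean value property with H\"older gives $|w_j(\ulx)|\leq CR^{-n/p}\|w_j\|_{L^p}\to 0$ as $R\to\infty$, so $w_j\equiv 0$. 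Letting $x_0\to 0$ yields $u_j(\cdot,\epsilon)=R_j\bigl(u_0(\cdot,\epsilon)\bigr)$, and then $\epsilon\to 0$ in $L^p$, exactly as in your final step, gives $f_j=R_j(f_0)$. With this replacement (valid in fact for all $1<p<\infty$) your outline becomes a complete proof.
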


 \begin{prop}[{\cite[page 221]{Stein2}}]\label{stein p221 th6}
Let $F\in \mathcal H^1(\mathbf R^{n+1}_+).$ Then $\lim\limits_{x_0\to 0}F(\ulx,x_0)=F(\ulx)$ exists almost everywhere, as well as in the $L^1(\mathbf{R}^n)$ norm. Also
\[\int_{\mathbf{R}^n}\sup\limits_{x_0>0}|F(\ulx,x_0)|d\ulx\leq A\sup\limits_{x_0>0}\int_{\mathbf{R}^n}|F(\ulx,x_0)|d\ulx=A\|F\|_1. \]
Moreover, the space $\mathcal H^1(\mathbf R^{n+1}_+)$ is naturally isomorphic with the space of $L^1(\mathbf{R}^n)$ functions $f_0$ which have the property that $R_j(f_0)\in L^1(\mathbf{R}^n),j=1,\ldots,n.$ The $\mathcal H^1$ norm is then equivalent with $\|f_0\|_1+\sum\limits_{j=1}^n\|R_j(f_0)\|_1.$
Conversely, suppose $f_0\in L^1(\mathbf{R}^n)$ and $R_j(f_0)\in L^1(\mathbf{R}^n), R_j(f_0)=f_j,j=1,\ldots,n.$ Then
\[\sum\limits_{j=0}^n\int_{\mathbf{R}^n}\sup\limits_{x_0>0}|u_j(\ulx,x_0)|d\ulx\leq A\sum\limits_{j=0}^n\|f_j\|_{L^1},\]
where $u_j=P_{x_0}*f_j,j=0,...,n.$
\end{prop}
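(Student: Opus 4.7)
The plan is to prove the three assertions in turn: existence of almost everywhere and $L^1$-norm non-tangential boundary limits together with the non-tangential maximal function bound; the isomorphism of $\mathcal H^1(\mathbf R^{n+1}_+)$ with the subspace of $L^1(\mathbf R^n)$ functions whose $n$ Riesz transforms are all in $L^1$; and the companion maximal function inequality in the converse direction.

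First I would invoke the Stein--Weiss subharmonicity fact: for a conjugate harmonic system $F=(u_0,\ldots,u_n)$ on $\mathbf R^{n+1}_+$ the function $|F|^{(n-1)/n}$ is subharmonic. Combined with $\|F\|_1<\infty$, this gives a uniform-in-$x_0$ bound on $\bigl\||F(\cdot,x_0)|^{(n-1)/n}\bigr\|_{n/(n-1)}$. Since $n/(n-1)>1$, Proposition \ref{stein p78}(a) applied to the least harmonic majorant yields some $g\in L^{n/(n-1)}(\mathbf R^n)$ with $|F(\ulx,x_0)|^{(n-1)/n}\leq (P_{x_0}*g)(\ulx)$ pointwise, hence $|F(\ulx,x_0)|\leq \bigl((P_{x_0}*g)(\ulx)\bigr)^{n/(n-1)}$.

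Next, the pointwise control of the non-tangential maximal function of a Poisson integral by the Hardy--Littlewood maximal function (the Deng--Han proposition stated just after Proposition \ref{stein p78}) gives $|F|^*(\ulx)\leq C(\mathcal M g)(\ulx)^{n/(n-1)}$, and the $L^{n/(n-1)}$ boundedness of $\mathcal M$ produces $\||F|^*\|_1\lesssim \|g\|_{n/(n-1)}^{n/(n-1)}\lesssim \|F\|_1$, which is the required maximal function estimate. With $|F|^*\in L^1$, almost everywhere non-tangential convergence of each harmonic component $u_j$ follows from standard Fatou-type theory for harmonic functions with integrable maximal function, and the dominated convergence theorem upgrades this to $L^1$-norm convergence. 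Each $u_j$ is then the Poisson integral of its boundary function $f_j\in L^1(\mathbf R^n)$, and passing the generalized Cauchy--Riemann system (\ref{GCR}) to the Fourier side exactly as in Proposition \ref{stein p65} forces $f_j=R_j(f_0)$ for $j=1,\ldots,n$, giving the norm equivalence with $\|f_0\|_1+\sum_j\|R_j(f_0)\|_1$.

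For the converse, starting from $f_0\in L^1$ with $f_j:=R_j(f_0)\in L^1(\mathbf R^n)$, I would set $u_j:=P_{x_0}*f_j$; a direct kernel computation (or the Fourier symbol identification using that $\widehat{R_j f_0}=-i\xi_j/|\ulxi|\,\hat f_0$ on a Schwartz dense subclass) verifies that $(u_0,\ldots,u_n)$ satisfies (\ref{GCR}) and lies in $\mathcal H^1(\mathbf R^{n+1}_+)$; controlling each $u_j^*$ by $|F|^*$ and invoking the maximal function bound proved above yields $\sum_{j=0}^n\|u_j^*\|_1\leq A\sum_{j=0}^n\|f_j\|_1$. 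The main obstacle is the sharp subharmonicity statement for $|F|^{(n-1)/n}$: it requires a careful computation of $\Delta(|F|^q)$ exploiting the Cauchy--Riemann relations to show that the exponent $q=(n-1)/n$ works, together with a regularization argument on the zero set of $F$. Once past this genuinely conjugate-harmonic-specific point, the remaining ingredients are standard real-variable maximal function theory plus density of Schwartz functions to justify the $L^p$ passages, and the argument proceeds mechanically.
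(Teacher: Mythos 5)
This proposition is one the paper quotes from Stein's book without giving its own proof, so the only fair comparison is with the classical argument in the cited source, and your outline reconstructs exactly that Stein--Weiss argument (subharmonicity of $|F|^{(n-1)/n}$, harmonic majorization by the Poisson integral of an $L^{n/(n-1)}$ function obtained by weak compactness, control of the non-tangential maximal function by $\mathcal M g$, Fatou-type a.e.\ convergence plus dominated convergence for the $L^1$ limit, then the multiplier identification $\hat f_j=-i\xi_j|\ulxi|^{-1}\hat f_0$); it is sound. The two places you lean on citation rather than proof are the subharmonicity lemma itself (which you correctly flag as the crux) and, in the $L^1$ setting, the identification of the boundary function $f_j$ with the principal-value Riesz transform $R_j(f_0)$, which needs the conjugate-Poisson-kernel versus truncated-singular-integral comparison rather than literally ``as in Proposition \ref{stein p65}'' (an $L^2$ statement) --- both are standard textbook facts, so these are presentation points, not gaps in the strategy.
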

It is noted that the Riesz transformations $R_j,1\leq j\leq n,$ defined as (\ref{Riesz-transform}) are bounded operator from $L^p(\mathbf R^n)$ to $L^p(\mathbf R^n),1<p<\infty,$ and weakly bounded from $L^1(\mathbf R^n)$ to $L^1(\mathbf R^n).$ When $p=\infty,$ we need to revise the Riesz transforms (see e.g. \cite{Uchiyama}) $R_j,1\leq j\leq n,$ as
\begin{align*}
R_j(f_0)(\ulx)=\lim_{\epsilon\to 0}\frac{1}{\sigma_n}\int_{|\ulx-\uly|>\epsilon}\left(\frac{x_j-y_j}{|\ulx-\uly|^{n+1}}-\chi_{\{|\uly|>1\}}(\uly)\frac{-y_j}{|-\uly|^{n+1}}\right)f_0(\uly)d\uly,
\end{align*}
which are well-defined for $f_0\in L^\infty(\mathbf R^n)$ up to a constant, and $\chi_{\{|\uly|>1\}}$ is the characteristic function for $\{\uly\in \mathbf R^n; |\uly|>1\}$. The revised Riesz transformations are bounded from $L^\infty(\mathbf R^n)$ to $BMO(\mathbf R^n)$ (Bounded Mean Oscillation space).

\def\bRn{\mathbf R^n}

\subsection{Clifford algebra-valued Hardy $H^p$ spaces}
Let $\bfe_1,...,\bfe_n$ be basic elements satisfying
$$
\bfe_j \bfe_k+ \bfe_k \bfe_j =-2\delta_{jk}, \quad j,k=1,...,n,
$$
where $\delta_{jk}$ is the Kronecker delta function.
Let $\mathbf R^n=\{\ulx =x_1\bfe_1+\cdots +x_n\bfe_n;x_j\in \mathbf R, 1\leq j\leq n\}$ be identical with the usual Euclidean space $\mathbf R^n,$ and $\mathbf R^{n+1}_+=\{x_0+\ulx; x_0>0,\ulx\in \mathbf R^n\}.$

The real (complex) Clifford algebra $\mathbf R^{(n)}$ $(\mathbf C^{(n)})$ generated by $\bfe_1,...,\bfe_n,$ is the associative algebra generated by $\bfe_1,...,\bfe_n$ over the real (complex) field $\mathbf R$ $(\mathbf C)$.
The elements of $\mathbf R^{(n)}$ $(\mathbf C^{(n)})$ are of the form $x=\sum_{T}x_T \bfe_T,$ where $T=\{1\leq j_1<j_2<\cdots<j_l\leq n\}$ runs over all ordered subsets of $\{1,...,n\}$, $x_T\in \mathbf R$ $(\mathbf C)$ with $x_{\emptyset}=x_0,$ and $\bfe_T=\bfe_{j_1}\bfe_{j_2}\cdots \bfe_{j_l}$ with the identity element $\bfe_{\emptyset}=\bfe_0=1.$ Sc $x:=x_0$ and NSc $x:=x-\text{Sc }x$ are respectively called the scalar part and the non-scalar part of $x.$
In this paper, we denote the conjugate of $x\in\mathbf C^{(n)}$ by $\overline x=\sum_{T}\overline x_T \overline \bfe_T$, where $\overline \bfe_T=\overline \bfe_{j_l}\cdots \overline \bfe_{j_2}\overline \bfe_{j_1}$ with $\overline \bfe_0=\bfe_0$ and $\overline \bfe_j=-\bfe_j$ for $j \neq 0.$ The norm of $x\in \mathbf C^{(n)}$ is defined as $|x|:=(\text{Sc } \overline x x)^\frac{1}{2}=(\sum_{T}|x_T|^2)^{\frac{1}{2}}.$ Generally, a Clifford algebra is not a division algebra, unless $n=2$, that corresponds to the algebra of quaternions. $x=x_0+\ulx\in \mathbf R^{n+1}$ is called a vector or a para-vector, and the conjugate of a vector $x$ is $\overline x=x_0-\ulx.$ If $x$ is a vector then $x^{-1}=\frac{\overline x}{|x|^2}.$ For more information about Clifford algebra, we refer to \cite{Brackx-Delanghe-Sommen}.

A Clifford algebra-valued function $F$ is left-monogenic (resp. right-monogenic) if
\begin{align*}
DF=(\sum_{k=0}^n \partial_k\bfe_k)F=0\ \left(\text{resp. } FD=F(\sum_{k=0}^n \partial_k\bfe_k)=0\right),
\end{align*}
where $\partial_k=\frac{\partial}{\partial x_k},0\leq k\leq n,$ and $D$ is the Dirac operator. Note that $\overline D(DF)=\Delta F=0$ if $F$ is left-monogenic, which means that each component of $F$ is harmonic.
A function that is both left- and right-monogenic is called a monogenic function. Vector-valued left-monogenic functions are simultaneously right-monogenic functions, and vice-versa, and thus they are monogenic.

The Fourier transform of a function in $L^1(\mathbf R^n)$ is defined as
$$
\hat f(\ulxi)=\mathcal F(f)(\ulxi)=\int_{\mathbf R^n}e^{-2\pi i\langle \ulx,\ulxi\rangle}f(\ulx)d\ulx,
$$
where $\ulxi=\xi_1 \bfe_1+\cdots +\xi_n \bfe_n\in\mathbf R^n,$ and the inverse Fourier transform is formally defined as
$$
 g^\vee(\ulx)=\mathcal F^{-1}(g)(\underline x)=\int_{\mathbf R^n}e^{2\pi i\langle \underline x,\underline \xi \rangle}g(\underline \xi)d\underline \xi.
$$

  We note that the Fourier transformation is linear and thus it, together with some of its properties, can be extended to Clifford algebra-valued functions. In particular, the Plancherel Theorem holds for Clifford algebra-valued functions: For Clifford algebra-valued functions $f, g\in L^2(\bRn)$ there holds
  \[ \int_{\bRn} f(\ulx)\overline{g}(\ulx)d\ulx = \int_{\mathbf R^n} \hat{f}(\ulxi)\overline{\hat{g}}(\ulxi)d\ulxi.\]
  An alternative form of the Plancherel Theorem is
  \[ \int_{\mathbf R^n} {f}(\ulx){g}(\ulx)d\ulx = \int_{\mathbf R^n} \hat{f}(\ulxi){g}^\vee(\ulxi)d\ulxi.\]

\noindent Define, for $x=x_0+\ulx,$
$$
e(x,\underline \xi)=e^+(x,\ulxi)+e^-(x,\ulxi)
$$
with
$$
e^{\pm}(x,\ulxi)=e^{2\pi i\langle \ulx,\ulxi\rangle}e^{\mp 2\pi x_0|\ulxi|}\chi_{\pm}(\ulxi)
$$
(see e.g. \cite{Li-Mc-Qian}),
where $\chi_\pm(\ulxi)=\frac{1}{2}(1\pm i\frac{\ulxi}{|\ulxi|}).$ $\chi_\pm$ enjoy the following projection-like properties:
\begin{align}\label{important}
\chi_-\chi_+=\chi_+\chi_-=0,\quad \chi^2_\pm=\chi_\pm,\quad\chi_++\chi_-=1.
\end{align}

\begin{defn}\label{Hp definition}
Let $F(x)=\sum_{T}f_T(x)\bfe_T,$ where $x=x_0+\underline{x}\in \mathbf{R}^{n+1}_{+}.$ If $F$ is left-monogenic on $\mathbf{R}^{n+1}_{+}$ and satisfies
\begin{eqnarray}\label{hp bounded condition clifford}||F||_{H^p}^p=\sup\limits_{x_0>0}\int_{\mathbf{R}^n} |F(x_0+\underline{x})|^pd\underline{x}< \infty,\ 1\leq p<\infty,\end{eqnarray}
then we say $F(x)$ belongs to the Hardy space $H^p(\mathbf{R}^{n+1}_{+}). $  If $F$ is left-monogenic on $\mathbf{R}^{n+1}_{+}$ and satisfies
\[||F||_{H^\infty}=\sup\limits_{x\in \mathbf{R}^{n+1}_{+}}|F(x)|< \infty,\] then we say $F(x)\in H^{\infty}(\mathbf{R}^{n+1}_{+}).$
\end{defn}
When $p=2,$ the inner product of $H^2(\mathbf R^{n+1}_+)$ is defined as
\begin{align*}
\langle F, G\rangle=\int_{\mathbf R^n} \overline G(\ulx) F(\ulx) d\ulx,
\end{align*}
where $F(\ulx)$ and $G(\ulx)$ are respectively the NTBL functions of $F$ and $G.$ The norm of $H^2(\mathbf R^{n+1}_+)$ is, in fact, equal to
\begin{align*}
||F||_{H^2}^2=\text{Sc }  \langle F, F\rangle.
\end{align*}

In particular, if $F\in H^p(\mathbf R^{n+1}_+),1\leq p\leq\infty,$ is a vector-valued function, i.e., $F(x)=f_0(x)+\sum_{j=1}^n f_j(x)\bfe_j,$ then $F$ corresponds to an element in $\mathcal H^p(\mathbf R^{n+1}_+)$ (\cite{Kou-Qian}).
 To see this, we let $U(x_0+\underline{x})=u_0(x_0+\underline{x})
 -u_1(x_0+\underline{x})\mathbf{e}_1-\cdots-u_n(x_0+\underline{x})\mathbf{e}_n.$ With $\mathbf{e}_0=1,$ we have
\begin{eqnarray}\label{monogenic condition}
DU &=&(\sum_{k=0}^n \partial_k\mathbf{e}_k)[u_0(x_0+\underline{x})\mathbf{e}_0
-u_1(x_0+\underline{x})\mathbf{e}_1-\cdots-u_n(x_0+\underline{x})\mathbf{e}_n]\nonumber\\
&=&\frac{\partial u_0}{\partial x_0}+\frac{\partial u_1}{\partial x_1}+\cdots+\frac{\partial u_n}{\partial x_n}+
\sum_{0\leq j< k\leq n}(\frac{\partial u_j}{\partial x_k}-\frac{\partial u_k}{\partial x_j})\mathbf{e}_j\mathbf{e}_k,
\end{eqnarray}
which means that $DU=0$ if and only if $U$ satisfies (\ref{GCR}).
Thus $\mathcal H^p(\mathbf R^{n+1}_+)$ can be regarded as a proper subspace of $H^p(\mathbf R^{n+1}_+).$

By Propositions \ref{stein p65}$-$\ref{stein p221 th6}, we have, for $1\leq p<\infty,$
\begin{align*}
U(\ulx)=u_0(\ulx)-\sum_{j=1}^n R_j(u_0)(\ulx)\bfe_j.
\end{align*}
For general vector-valued $F\in H^p(\mathbf R^{n+1}_+),1\leq p<\infty,$ there holds
\begin{align*}
F(\ulx)=f_0(\ulx)-\sum_{j=1}^n R_j(f_0)(\ulx)\bfe_j=(I+H)f_0(\ulx),
\end{align*}
where the operator $H,$ the Hilbert transformation, is defined by
\begin{align*}
H=-\sum_{j=1}^n R_j\bfe_j.
\end{align*}

\noindent Note that the Fourier multiplier of $R_j$ is $-i\frac{\xi_j}{|\ulxi|}$ (see e.g. \cite{Grafakos}), i.e.,
\begin{align*}
R_j(f_0)(\ulx)=\left(-i\frac{\xi_j}{|\ulxi|}\hat f_0(\ulxi)\right)^\vee (\ulx).
\end{align*}
This implies that the Fourier multipliers of $\frac{1}{2}(I\pm H)$ are, respectively, $\chi_{\pm}(\ulxi).$

The Cauchy integral formula in Clifford analysis may be regarded as the reproducing formula. In particular, for $F\in H^p(\mathbf R^{n+1}_+),$ the following reproducing formula plays an important role.
\begin{prop}[see e.g. \cite{Gilbert-Murray,Mitrea}]\label{Cauchy-formula}
For $F\in H^p(\mathbf R^{n+1}_+),1\leq p<\infty,$ we have
\begin{align*}
F(x)=\int_{\mathbf R^n}E(x-\uly)F(\uly) d\uly,
\end{align*}
where $E(x)=\frac{1}{2\sigma_n}\frac{\overline x}{|x|^{n+1}}$ is the Cauchy kernel, and $F(\uly)$ is the NTBL function of $F$.
\end{prop}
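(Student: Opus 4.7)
The plan is to apply the Clifford version of the Cauchy--Pompeiu formula (Stokes' theorem for the Dirac operator $D$) on an exhausting family of bounded subdomains of $\mathbf R^{n+1}_+$ and to pass to the limit. Fix $x = x_0 + \ulx$ with $x_0 > 0$, and, viewing $\ulx \in \mathbf R^n$ as the point of $\mathbf R^{n+1}$ with vanishing $x_0$-coordinate, set $a := \ulx$. For each $\epsilon \in (0, x_0/2)$ the translate $F_\epsilon(y) := F(y + \epsilon \bfe_0)$ is left-monogenic on an open neighborhood of $\overline{\mathbf R^{n+1}_+}$, continuous up to $\{y_0 = 0\}$, and inherits $\sup_{y_0 \geq 0} \|F_\epsilon(y_0, \cdot)\|_p \leq \|F\|_{H^p}$. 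On the half-ball $B_R^+ := \{y : |y - a| < R,\ y_0 > 0\}$ with $R > 2x_0$, the Cauchy integral formula for left-monogenic functions gives
\[
F_\epsilon(x) = \int_{\partial B_R^+} E(y - x)\, n(y)\, F_\epsilon(y)\, dS(y).
\]
Splitting $\partial B_R^+$ into the flat disk $D_R := \{(\uly, 0) : |\uly - \ulx| < R\}$ (on which $n(y) = -\bfe_0$) and the open hemisphere $S_R^+$, and using $E(-z) = -E(z)$, this rewrites as
\[
F_\epsilon(x) = \int_{D_R} E(x - \uly)\, F_\epsilon(\uly)\, d\uly + \int_{S_R^+} E(y - x)\, n(y)\, F_\epsilon(y)\, dS(y).
\]

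The main task is to kill the hemisphere term along some $R_k \to \infty$. Since $|E(y - x)| \lesssim r^{-n}$ on $S_r^+$, setting $\phi(r) := \int_{S_r^+} |E(y - x)|\,|F_\epsilon(y)|\, dS(y)$ and applying the coarea formula together with H\"older's inequality slice-by-slice in $\uly$ yields
\[
\int_R^{2R} \phi(r)\, dr \lesssim R^{-n} \int_{\{R < |y - a| < 2R,\ y_0 > 0\}} |F_\epsilon(y)|\, dy \lesssim R^{-n} \cdot R^{1 + n/q} \|F\|_{H^p} = R^{\,1 - n/p} \|F\|_{H^p},
\]
with $q = p/(p-1)$. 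Hence the mean of $\phi$ on $(R, 2R)$ is at most $C R^{-n/p}$, which tends to $0$ for every $p \in [1, \infty)$; consequently some $R_k \in (R, 2R)$ satisfies $\phi(R_k) \to 0$. Sending $R = R_k \to \infty$ produces
\[
F_\epsilon(x) = \int_{\mathbf R^n} E(x - \uly)\, F_\epsilon(\uly)\, d\uly,
\]
with absolute convergence because $E(x - \cdot) \in L^q(\mathbf R^n)$ for every $q > 1$ (it is bounded by $(2\sigma_n x_0^n)^{-1}$ near $\uly = \ulx$ and decays like $|\uly|^{-n}$ at infinity), while $F_\epsilon \in L^p(\mathbf R^n)$.

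It remains to let $\epsilon \to 0^+$. The left-hand side tends to $F(x)$ by continuity of $F$ at the interior point $x$. On the right, $F(\epsilon, \cdot)$ converges to the NTBL $F(\cdot)$ in $L^p(\mathbf R^n)$: for $1 < p < \infty$ this follows from Proposition \ref{stein p78}(a) applied componentwise to the harmonic components of $F$, together with the standard approximate-identity property of the Poisson kernel; for $p = 1$ it rests on the fact that $|F|^s$ is subharmonic for some $s \in ((n-1)/n, 1]$, whence the nontangential maximal function of $F$ is in $L^1$ and the boundary measure of each component is absolutely continuous with $L^1$ density (the Clifford analog of the argument underlying Proposition \ref{stein p221 th6}). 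H\"older's inequality then passes this limit through the integral on the right, yielding the claimed reproducing identity.

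The principal obstacle is the hemisphere estimate: the natural pointwise bound $|F(y)| \lesssim \|F\|_{H^p}\, y_0^{-n/p}$ (from the mean-value inequality applied to the subharmonic $|F|^p$ on interior balls of radius $y_0/2$) is not integrable against the hemisphere area element once $1 \leq p \leq n$, and so one cannot simply take $R \to \infty$ pointwise. The averaging device above, which uses only the uniform slice-bound $\|F(y_0, \cdot)\|_p \leq \|F\|_{H^p}$ and not any pointwise decay of $F$, is precisely what rescues the argument in the full range $1 \leq p < \infty$. A secondary, minor delicacy is the $L^p$ convergence to the NTBL at $p = 1$, which requires the subharmonic-majorant / maximal-function argument in place of the direct Poisson recovery available for $p > 1$.
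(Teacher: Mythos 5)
Your proof is correct, and it takes a genuinely different (and more classical) route than anything the paper itself writes down: the paper states this proposition without proof, quoting the references, and only later (in the remark following Lemma \ref{formula-infty}) indicates an alternative derivation by showing that $(I-H)F$ annihilates every $\phi\in S_0(\mathbf R^n)$ — i.e.\ $F=HF$ (plus a constant when $p=\infty$) — via the Fefferman--Stein transposition of the Riesz transforms together with left-monogenicity, from which the Cauchy reproducing formula follows on the Fourier-multiplier side. Your argument instead runs the Clifford Cauchy formula on half-balls and kills the hemisphere by averaging in the radius, which is exactly the right device: the pointwise bound $|F(y)|\lesssim \|F\|_{H^p}y_0^{-n/p}$ is indeed useless for $p\le n$, while your use of only the uniform slice bound gives a mean of order $R^{-n/p}$ and hence a good sequence $R_k\to\infty$; the passage $\epsilon\to 0$ then follows from $L^p$ convergence of the horizontal slices to the NTBL together with $E(x-\cdot)\in L^q$, and the orientation/oddness bookkeeping on the flat disk is right. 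The comparison: your route is elementary and self-contained for $1<p<\infty$ (Poisson representation plus approximate identity), but at $p=1$ it must import the subharmonicity of $|F|^s$ for monogenic $F$ with $s\ge\frac{n-1}{n}$ and the a.e.\ existence of the NTBL — essentially the Clifford analogue of Proposition \ref{stein p221 th6}, i.e.\ the content of the very references the proposition cites (note also you want $s<1$ strictly for the maximal-function step, so the interval should read $[\frac{n-1}{n},1)$ rather than $(\frac{n-1}{n},1]$); the paper's multiplier route, by contrast, is uniform in $p$ and even reaches $p=\infty$ modulo constants, at the price of distributional analysis and the $H^1$--$BMO$ pairing.
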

When $1<p<\infty,$ one has
\begin{prop}[see e.g. \cite{Mitrea}]\label{boundedness-Cauchy}
For $f\in L^p(\mathbf R^n),1<p<\infty,$ there exists a constant $L>0$ such that
\begin{align*}
||C_{x_0}(f)||_{L^p}\leq L||f||_{L^p},
\end{align*}
where $$C_{x_0}(f)(\underline{x})=\int_{\mathbf R^n}E(x-\uly)f(\uly) d\uly,\quad x=x_0+\underline{x},x_0>0,$$ and $L$ is independent of $x_0$ and $f.$ As a consequence, as a function of $x,$ the function $C_{x_0}(f)(\ulx)$ belongs to $H^p(\mathbf R^{n+1}_+).$

Moreover,
\begin{align*}
\lim_{x_0\to 0+} C_{x_0}(f)(\ulx)=\frac{1}{2}(f(\ulx)+Hf(\ulx)), \quad a.e.\ \ulx\in \mathbf R^n.
\end{align*}
\end{prop}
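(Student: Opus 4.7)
The plan is to decompose the Cauchy kernel into its scalar and vector parts, identify them respectively with the Poisson kernel and the conjugate Poisson kernels, and then apply classical $L^p$ theory. Writing $\overline{x-\uly}=x_0-(\ulx-\uly)$ with $x=x_0+\ulx$, one has
\begin{align*}
E(x-\uly)=\tfrac{1}{2}P_{x_0}(\ulx-\uly)-\tfrac{1}{2}\sum_{j=1}^n \bfe_j\, Q_j^{x_0}(\ulx-\uly),
\end{align*}
where $P_{x_0}(\underline{z})=\frac{1}{\sigma_n}\frac{x_0}{(x_0^2+|\underline{z}|^2)^{(n+1)/2}}$ is the Poisson kernel of $\mathbf R^{n+1}_+$ and $Q_j^{x_0}(\underline{z})=\frac{1}{\sigma_n}\frac{z_j}{(x_0^2+|\underline{z}|^2)^{(n+1)/2}}$ are the conjugate Poisson kernels. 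Accordingly,
\begin{align*}
C_{x_0}(f)(\ulx)=\tfrac{1}{2}(P_{x_0}*f)(\ulx)-\tfrac{1}{2}\sum_{j=1}^n \bfe_j (Q_j^{x_0}*f)(\ulx).
\end{align*}

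For the norm bound, $\|P_{x_0}\|_{L^1}=1$ gives $\|P_{x_0}*f\|_{L^p}\le \|f\|_{L^p}$ by Young's inequality. For the vector part, I would verify via the Fourier transform (the multiplier of $Q_j^{x_0}$ is $-i(\xi_j/|\ulxi|)e^{-2\pi x_0|\ulxi|}$, which factors as the multiplier of $P_{x_0}$ times that of $R_j$) the identity $Q_j^{x_0}*f=P_{x_0}*R_j(f)$. Combined with the Calder\'on--Zygmund $L^p$-boundedness of the Riesz transformations for $1<p<\infty$ and Young's inequality, this yields $\|Q_j^{x_0}*f\|_{L^p}\le \|R_j(f)\|_{L^p}\le A_p\|f\|_{L^p}$. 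Summing, $\|C_{x_0}(f)\|_{L^p}\le L\|f\|_{L^p}$ with $L$ independent of $x_0$ and $f$. Left-monogenicity of $C_{x_0}(f)$ in $x$ follows by differentiation under the integral (justified for fixed $x$ with $x_0>0$ by smoothness and decay of $E$ away from the origin), since $DE=0$ there. Together with the uniform $L^p$ bound over $x_0>0$, this places $C_{x_0}(f)$ in $H^p(\mathbf R^{n+1}_+)$.

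For the boundary limit, classical Poisson integral theory yields $P_{x_0}*f(\ulx)\to f(\ulx)$ a.e. as $x_0\to 0+$, and the same applied to the $L^p$-function $R_j(f)$ gives $Q_j^{x_0}*f(\ulx)=P_{x_0}*R_j(f)(\ulx)\to R_j(f)(\ulx)$ a.e. Since scalar-valued $R_j(f)$ commutes with $\bfe_j$, combining these yields
\begin{align*}
\lim_{x_0\to 0+} C_{x_0}(f)(\ulx)=\tfrac{1}{2}f(\ulx)-\tfrac{1}{2}\sum_{j=1}^n \bfe_j R_j(f)(\ulx)=\tfrac{1}{2}(f+Hf)(\ulx)
\end{align*}
for a.e.\ $\ulx$, recalling $H=-\sum_{j=1}^n R_j\bfe_j$.

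The main obstacle I foresee is establishing the identity $Q_j^{x_0}*f=P_{x_0}*R_j(f)$ cleanly across all $p\in(1,\infty)$: on the Fourier side it is immediate for Schwartz $f$, but extending it to general $f\in L^p$ (particularly $p>2$) requires a density argument together with careful use of the $L^p$ continuity of both $P_{x_0}*(\cdot)$ and $R_j$. Once this is in hand, every other step reduces to standard Calder\'on--Zygmund and Poisson integral machinery.
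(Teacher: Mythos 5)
Your proposal is essentially correct, but note that the paper does not prove this proposition at all: it is imported as a known result, cited to Mitrea's book (and it is classical, going back to the Stein--Weiss theory of conjugate harmonic systems in the vector-valued case). So there is no in-paper argument to compare against; what you have written is the standard proof, and it is sound. The decomposition $E(x-\uly)=\tfrac12 P_{x_0}(\ulx-\uly)-\tfrac12\sum_j\bfe_j Q_j^{x_0}(\ulx-\uly)$ is exactly right with the paper's normalization $E(x)=\frac{1}{2\sigma_n}\frac{\overline x}{|x|^{n+1}}$, and the identity $Q_j^{x_0}*f=P_{x_0}*R_j(f)$, which you correctly single out as the only point needing care, is handled by the routine argument you sketch: verify it for $f\in L^p\cap L^2$ via Plancherel, then pass to general $f\in L^p$ using that $P_{x_0},\,Q_j^{x_0}\in L^{q}$ with $q=p/(p-1)$ (so both sides are pointwise continuous in $f$ with respect to $L^p$ convergence, the right-hand side also using $L^p$-boundedness of $R_j$). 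One small point of wording: since $f$ is Clifford algebra-valued, $R_j(f)$ is \emph{not} scalar-valued and does not commute with $\bfe_j$; your computed limit $\tfrac12 f-\tfrac12\sum_j\bfe_j R_j(f)$ is the correct one, and it agrees with $\tfrac12(I+H)f$ provided $H$ is read with $\bfe_j$ multiplying on the left (which is the interpretation forced by the paper's multiplier convention $\widehat{\tfrac12(I+H)f}=\chi_+\hat f$, the paper's own notation $H=-\sum_j R_j\bfe_j$ versus $H=-\sum_k\bfe_k R_k$ being loose on this point). With that understanding, the norm bound, the monogenicity by differentiation under the integral sign, and the a.e.\ boundary limit via Poisson-integral theory applied to $f$ and to $R_j(f)$ all go through as you describe.
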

There are parallel results for the lower-half of the space $\mathbf R^{n+1}.$ If we apply Proposition \ref{boundedness-Cauchy} to the NTBL of a Hardy space function $F,$ then we conclude, from the last relation, $H^2=I.$

Let $\Psi(\mathbf R^n)$ be the Clifford algebra-valued Schwartz space, whose elements are given by
\begin{align*}
\psi(\ulxi)=\sum_{T}\psi_T(\ulxi) \bfe_T,
\end{align*}
where $\psi_T$ are in the Schwartz space $S(\mathbf R^n).$ Denote by $\Psi^\pm(\mathbf R^n)$ the subclasses of $\Psi(\mathbf R^n)$ consisting of the Clifford algebra-valued Schwartz functions of, respectively, the forms
\begin{align*}
\psi(\ulxi)=\psi(\ulxi)\chi_\pm(\ulxi),
\end{align*}
where $\psi(\ulxi)$ takes the zero value in some neighborhood of the origin.
It is easy to show that the direct sum $\Psi^+(\mathbf R^n)\oplus\Psi^-(\mathbf R^n)$ is dense in $\Psi(\mathbf R^n)$.
  \def\ux{\underline{x}}
  \def\uxi{\underline{\xi}}

Denote by $\Psi^\pm_{scalar}(\mathbf R^n)$ the subclasses of $\Psi^\pm(\mathbf R^n)$, whose elements are of the form
\begin{align*}
\psi(\ulxi)=\widetilde \psi_0(\ulxi) \chi_\pm(\ulxi) ,
\end{align*}
where $\widetilde \psi_0$ is scalar-valued.

\noindent For more information on the Clifford algebra-valued distribution theory, we refer to, e.g., \cite{Brackx-Delanghe-Sommen} and \cite{DMitrea}.

\section{Main Results}
Our main results are as follows.
\begin{theorem}\label{Necessary-condition}
For $F\in H^p(\mathbf R^{n+1}_+),1\leq p\leq \infty,$ we have
\begin{align*}
(\hat F,\psi)=(F,\hat\psi)=\int_{\mathbf R^n}\hat \psi(\ulx)F(\ulx)d\ulx=0,
\end{align*}
where $\psi\in \Psi^-(\mathbf R^n).$
\end{theorem}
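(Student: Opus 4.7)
The plan is to construct a right-monogenic extension of $\hat\psi$ into $\mathbf R^{n+1}_+$ and apply a Clifford Green's identity against the left-monogenic $F$. Since $\chi_\pm(-\ulxi) = \chi_\mp(\ulxi)$, the reflection $\widetilde\psi(\ulxi) := \psi(-\ulxi)$ satisfies $\widetilde\psi = \widetilde\psi\,\chi_+$ and vanishes in some neighborhood $\{|\ulxi|<\delta\}$ of the origin. Define
$$
G(x_0+\ulx) = \int_{\mathbf R^n} e^{2\pi i\langle\ulx,\ulxi\rangle}\,e^{-2\pi x_0|\ulxi|}\,\widetilde\psi(\ulxi)\,d\ulxi, \qquad x_0\geq 0,
$$
so that $G(0,\ulx) = \hat\psi(\ulx)$. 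Using the algebraic identity $\chi_+(\ulxi)\,\ulxi = -i|\ulxi|\,\chi_+(\ulxi)$ (which follows from $\ulxi^2 = -|\ulxi|^2$), I would check by differentiation under the integral that $GD = 0$: the $\partial_{x_0}G$ contribution $-2\pi|\ulxi|\,\widetilde\psi$ exactly cancels with $2\pi i\sum_{k=1}^n\xi_k\,\widetilde\psi\,\bfe_k = 2\pi i\,\widetilde\psi\,\ulxi = 2\pi|\ulxi|\,\widetilde\psi$, using $\widetilde\psi = \widetilde\psi\chi_+$.

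Next, I apply the Clifford Green's identity for the left-monogenic $F$ and right-monogenic $G$ on the slab $\Omega_{\epsilon,R} = \{\epsilon < x_0 < R\}$; with outward normals $\pm\bfe_0$ on the top and bottom faces, it reduces to
$$
\int_{\mathbf R^n}G(R,\ulx)\,F(R,\ulx)\,d\ulx \;=\; \int_{\mathbf R^n}G(\epsilon,\ulx)\,F(\epsilon,\ulx)\,d\ulx.
$$
To send $R\to\infty$, I exploit the spectral gap of $\widetilde\psi$: $\|G(R,\cdot)\|_\infty \leq e^{-2\pi R\delta}\|\widetilde\psi\|_{L^1}$ decays exponentially, while $G(R,\cdot)$ is the Poisson integral (at height $R$) of the Schwartz function $\widetilde\psi^\vee$, so $\|G(R,\cdot)\|_{L^1}\leq \|\widetilde\psi^\vee\|_{L^1}$ uniformly in $R$. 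Interpolation yields $\|G(R,\cdot)\|_{L^{p'}}\to 0$ for every $p'>1$, and H\"older against $\|F(R,\cdot)\|_{L^p}\leq\|F\|_{H^p}$ kills the top boundary term whenever $1\leq p<\infty$. For $p=\infty$ (so $p'=1$), the $L^1$-decay of $G(R,\cdot)$ requires Fourier integration-by-parts: derivatives of $e^{-2\pi R|\ulxi|}$ on $\{|\ulxi|\geq\delta\}$ grow only polynomially in $R$, yielding a bound of the form $|\ulx|^{n+1}|G(R,\ulx)|\leq C R^{n+1}e^{-\pi R\delta}$ and hence $\|G(R,\cdot)\|_{L^1}\to 0$.

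Finally, let $\epsilon\to 0$. For $1\leq p<\infty$, $F(\epsilon,\cdot)\to F$ in $L^p$ norm as the NTBL while $G(\epsilon,\cdot)\to \hat\psi$ in $L^{p'}$, so H\"older yields $\int G(\epsilon,\cdot)F(\epsilon,\cdot)\to \int\hat\psi\cdot F$. For $p=\infty$, $F(\epsilon,\cdot)$ converges pointwise almost everywhere with uniform $L^\infty$ bound, and $G(\epsilon,\cdot)\to\hat\psi$ in $L^1$ (Poisson integral of a Schwartz function converges in $L^1$); splitting $\int G_\epsilon F_\epsilon - \int\hat\psi F = \int(G_\epsilon-\hat\psi)F_\epsilon + \int\hat\psi(F_\epsilon - F)$ gives convergence by $L^1$ convergence on the first piece and dominated convergence on the second. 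Combining the two limits yields $\int_{\mathbf R^n}\hat\psi(\ulx)F(\ulx)\,d\ulx = 0$, i.e., $(\hat F,\psi) = (F,\hat\psi) = 0$. The main obstacle is the top-boundary decay as $R\to\infty$ in the endpoint $p=\infty$, where the $L^1$-decay of $G(R,\cdot)$ must be extracted by balancing the exponential decay in $R$ (spectral gap of $\widetilde\psi$) against its uniform Schwartz decay in $\ulx$; a secondary subtlety is rigorous non-commutative bookkeeping of the multiplication order $GnF$ in the Clifford Green's identity.
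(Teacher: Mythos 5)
Your proposal is correct in substance but takes a genuinely different route from the paper. You pair $F$ against the right-monogenic extension $G(x_0+\ulx)=\int_{\mathbf R^n}e^{2\pi i\langle\ulx,\ulxi\rangle}e^{-2\pi x_0|\ulxi|}\widetilde\psi(\ulxi)\,d\ulxi$ of $\hat\psi$, whose reflected symbol satisfies $\widetilde\psi=\widetilde\psi\chi_+$ and vanishes near the origin, and you run the Clifford Cauchy--Stokes theorem on a slab; the spectral gap gives exponential decay of the top boundary term, and the bottom term converges to $\int_{\mathbf R^n}\hat\psi(\ulx)F(\ulx)\,d\ulx$. This yields a single unified argument for all $1\leq p\leq\infty$ and avoids the $BMO$/Fefferman--Stein machinery entirely. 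The paper instead argues case by case: for $1<p<\infty$ it approximates $F$ in $L^p$ by a nice function, uses the $L^p$-boundedness of the Cauchy integral (Proposition \ref{boundedness-Cauchy}) and the identity $(C_{x_0}(G))^\wedge=e^{-2\pi\frac{x_0}{2}|\ulxi|}\chi_+(C_{\frac{x_0}{2}}(G))^\wedge$, which kills $\Psi^-$ via $\chi_+\chi_-=0$; for $p=1$ it lifts to level $x_0$ by Lemma \ref{lifting up}; and for $p=\infty$ it proves $F=HF+c$ (Lemma \ref{formula-infty}) and uses the $H^1$--$BMO$ pairing with the Fefferman--Stein transposition of the Riesz transforms. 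What your route buys is uniformity in $p$ and the elimination of the heavy $p=\infty$ lemma, exploiting precisely the built-in assumption that $\psi$ vanishes near the origin; what it costs is that you must (i) justify the Green identity on the unbounded slab, which you leave implicit: integrate over $\{\epsilon\leq x_0\leq R,\ |\ulx|\leq M\}$ and let $M\to\infty$, using that $G(x_0,\cdot)$ is Schwartz uniformly for $x_0\in[\epsilon,R]$ and that $|F(x_0+\ulx)|\lesssim \epsilon^{-n/p}\|F\|_{H^p}$ on $\{x_0\geq\epsilon\}$ by the mean-value/subharmonicity argument (as in Lemma \ref{lifting up}); and (ii) invoke the vertical-limit facts $F(\epsilon,\cdot)\to F$ in $L^p$ for $1\leq p<\infty$ (nontrivial at $p=1$) and a.e.\ boundedly for $p=\infty$, which are standard for monogenic Hardy spaces and comparable in depth to what the paper itself assumes through Proposition \ref{Cauchy-formula}. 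Your key computations -- $\chi_+(\ulxi)\ulxi=-i|\ulxi|\chi_+(\ulxi)$ giving $GD=0$, the interpolation bound for $\|G(R,\cdot)\|_{L^{p'}}$, and the integration-by-parts $L^1$ decay needed at $p=\infty$ -- are all sound.
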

Conversely, we have
\begin{theorem}\label{Sufficient-condition}
Let $F\in L^p(\mathbf R^n),1\leq p\leq\infty.$ If $(\hat F,\psi)=0$ for $\psi\in \Psi^-(\mathbf R^n)$ ($\hat F=\chi_+\hat F$ if $1\leq p\leq 2$), then $F(\ulx)$ is the NTBL function of some $F(x)\in H^p(\mathbf R^{n+1}_+).$
\end{theorem}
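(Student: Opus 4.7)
The overall plan is, for each range of $p$, to produce an explicit monogenic extension $\tilde F$ of $F$ to $\mathbf R^{n+1}_+$, verify $\tilde F\in H^p(\mathbf R^{n+1}_+)$, and show its non-tangential boundary limit equals $F$ almost everywhere. I split the argument into three regimes, distinguished by how the spectral hypothesis has to be interpreted. The same candidate extension works in all cases; what changes is the machinery available to verify left-monogenicity and the boundary behavior.

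\emph{Regime $1\leq p\leq 2$.} In this range $\hat F$ lies in $L^{p'}(\mathbf R^n)$ by Hausdorff--Young (with $p'=\infty$ interpreted as bounded continuous when $p=1$), so $\hat F=\chi_+\hat F$ is a pointwise a.e.\ identity. I would define
$$ \tilde F(x_0+\ulx) := \int_{\mathbf R^n} e^{2\pi i\langle\ulx,\ulxi\rangle}e^{-2\pi x_0|\ulxi|}\hat F(\ulxi)\, d\ulxi. $$
Using the fact that $e^{-2\pi x_0|\ulxi|}$ is the Fourier multiplier of the Poisson kernel, this equals $(P_{x_0}*F)(\ulx)$, which immediately gives the bound $\|\tilde F(x_0,\cdot)\|_p\leq\|F\|_p$ via Young's inequality and the a.e.\ boundary limit $F$ from standard Poisson-integral theory (Proposition 2.2). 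Inserting the spectral identity rewrites the integrand as $e^+(x,\ulxi)\hat F(\ulxi)$, and a short computation using $\ulxi^2=-|\ulxi|^2$ gives $i\ulxi\,\chi_+(\ulxi)=|\ulxi|\chi_+(\ulxi)$, hence $D_x e^+(x,\ulxi)=0$; differentiating under the integral sign yields $D\tilde F\equiv 0$.

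\emph{Regime $2<p<\infty$.} Here $\hat F$ is only a tempered distribution, but Proposition 2.6 supplies a candidate directly: set $\tilde F(x):=C_{x_0}(F)(\ulx)$, which lies in $H^p(\mathbf R^{n+1}_+)$ with NTBL $\tfrac12(F+HF)$. It remains to show $HF=F$. Since the Fourier multiplier of $\tfrac12(I+H)$ is $\chi_+$ (as recorded in the preliminaries), the hypothesis $(\hat F,\psi)=0$ for all $\psi\in\Psi^-(\mathbf R^n)$, combined with the density of $\Psi^+\oplus\Psi^-$ in $\Psi$ and the projection identities $\chi_+\chi_-=0$, $\chi_++\chi_-=1$, will force $\hat F=\chi_+\hat F$ as Clifford-valued tempered distributions. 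Pairing this against suitable Schwartz test functions then translates it into the $L^p$ identity $\tfrac12(F+HF)=F$, so the NTBL of $\tilde F$ is exactly $F$.

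\emph{Regime $p=\infty$.} This endpoint is the main obstacle, because Proposition 2.6 does not apply and the Riesz transformations on $L^\infty$ are defined only modulo constants as $BMO$-valued maps. My plan is to define $\tilde F$ as the bounded harmonic Poisson extension $P_{x_0}*F$, so that $\|\tilde F\|_{H^\infty}\leq\|F\|_\infty$ automatically, and to establish monogenicity by computing $\widehat{D\tilde F}$ on the Fourier side: the factor that appears is $-2\pi|\ulxi|+2\pi i\ulxi=-4\pi|\ulxi|\chi_-(\ulxi)$, which when paired against any $\psi\in\Psi^-$ (avoiding the origin by definition) reduces to the distributional hypothesis $(\hat F,\psi)=0$, giving $D\tilde F=0$ by the density of $\Psi^+\oplus\Psi^-$. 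The delicate point will be making this distributional multiplication by $\chi_-$ rigorous in the Clifford-valued setting and identifying the resulting boundary trace pointwise with $F$ rather than $F$ plus a constant; this I expect to absorb by insisting on the Poisson reproducing formula as the defining identity for $\tilde F$ on the boundary, so that the constant ambiguity intrinsic to $HF$ never enters the construction.
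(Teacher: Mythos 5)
Your first regime ($1\leq p\leq 2$) is fine and is essentially the paper's own argument for $p=1$ (cf.\ Corollary \ref{representation-formula}): the Poisson integral coincides with $\int_{\mathbf R^n}e^+(x,\ulxi)\hat F(\ulxi)d\ulxi$ and monogenicity follows by differentiating under the integral. The genuine gap is in the step you use for $2<p\leq\infty$: passing from $(\hat F,\psi)=0$ for all $\psi\in\Psi^-(\mathbf R^n)$ to ``$\hat F=\chi_+\hat F$ as tempered distributions'' (and, at $p=\infty$, to $D\tilde F=0$) ``by density of $\Psi^+\oplus\Psi^-$.'' Every element of $\Psi^\pm(\mathbf R^n)$ vanishes in a neighborhood of the origin, so this family is not dense in the Schwartz class in the Schwartz topology (its closure consists of functions all of whose derivatives vanish at $\ulxi=0$), and the hypothesis carries no information about $\hat F$ near $\ulxi=0$; moreover, for $p>2$ the product $\chi_\pm\hat F$ is not even defined a priori, since $\chi_\pm$ is not smooth at the origin and $\psi\chi_\pm$ is Schwartz only when $\psi$ vanishes near $0$. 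The most your pairing argument yields is that the relevant object --- $\bigl(\frac12(I-H)F\bigr)^\wedge$ in your second regime, $\widehat{D\tilde F}(x_0+\cdot)$ in the third --- annihilates all Schwartz functions vanishing near the origin, hence is supported at $\{0\}$ and is a finite linear combination of derivatives of the Dirac delta. This is precisely where the paper does the real work: it uses $F\in L^p$, $p<\infty$, to exclude nonzero polynomials, giving $\frac12(I-H)F=0$, while at $p=\infty$ only a constant survives and $F=F^++c$. At $p=\infty$ your intermediate claim is actually false as stated: a constant $F$ satisfies $(\hat F,\psi)=0$ for all $\psi\in\Psi^-$, yet $\frac12(F+HF)$ differs from $F$ by that constant, so no density argument can force $\hat F=\chi_+\hat F$ or $HF=F$ there.

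Your Poisson-extension idea at the endpoint is salvageable, and would even be slightly cleaner than the paper's treatment because the constant ambiguity of $H$ on $L^\infty$ never enters; but it needs the same missing ingredient plus a little more: after the support-at-the-origin argument you only know that $D\tilde F(x_0+\cdot)$ is, for each $x_0>0$, a polynomial in $\ulx$; you must then add that the gradient of the bounded harmonic function $\tilde F=P_{x_0}*F$ is bounded on each slab $\{x_0\geq\delta\}$ (so the polynomial is a constant $c(x_0)$), and that harmonicity together with the decay $|\nabla\tilde F|\leq C\|F\|_\infty/x_0$ forces $c\equiv 0$. With the support/polynomial step supplied, your $2<p<\infty$ route through Proposition \ref{boundedness-Cauchy} and $HF=F$ becomes the paper's proof (there the decomposition $F=F^++F^-$, $F^\pm=\frac12(I\pm H)F$, is used, with Theorem \ref{Necessary-condition} applied to $F^+$); as written, however, the density appeal does not close either regime.
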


In particular, we have
\begin{coro}\label{representation-formula}
$F\in H^p(\mathbf R^{n+1}_+),1\leq p\leq 2$ if and only if $F\in L^p(\mathbf R^n)$ and $\hat F(\ulxi)=\chi_+(\ulxi)\hat F(\ulxi),$ and
\begin{align*}
F(x)=\int_{\mathbf R^n}e^+(x,\ulxi)\hat F(\ulxi)d\ulxi=\int_{\mathbf R^n} E(x-\uly)F(\uly)d\uly.
\end{align*}
\end{coro}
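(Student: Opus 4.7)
The plan is to derive this corollary as a direct consequence of Theorems \ref{Necessary-condition} and \ref{Sufficient-condition}, combined with the Cauchy reproducing formula (Proposition \ref{Cauchy-formula}) and a Fourier-analytic identification of $\int_{\mathbf R^n}e^+(x,\ulxi)\hat F(\ulxi)d\ulxi$ with the Poisson extension of the boundary data.

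For the equivalence part, the key point is that when $1\leq p\leq 2$ the Hausdorff--Young inequality guarantees that $\hat F\in L^{p'}(\mathbf R^n)$ is a bona fide function, so the pairing $(\hat F,\psi)$ reduces to the absolutely convergent integral $\int_{\mathbf R^n}\psi(\ulxi)\hat F(\ulxi)d\ulxi$. Exploiting the defining property $\psi=\psi\chi_-$ for $\psi\in\Psi^-(\mathbf R^n)$ together with the orthogonality relations $\chi_+\chi_-=\chi_-\chi_+=0$ from (\ref{important}), one checks that the vanishing of $(\hat F,\psi)$ on $\Psi^-(\mathbf R^n)$ is equivalent to $\chi_-\hat F=0$ almost everywhere, equivalently $\hat F=\chi_+\hat F$: the forward direction uses $\psi\,\hat F=\psi\chi_+\hat F=(\psi\chi_-)\chi_+\hat F=0$, while the reverse direction follows by a standard density argument, varying the scalar prefactor of $\psi$ over Schwartz functions that vanish near the origin. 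Combining this observation with Theorems \ref{Necessary-condition} and \ref{Sufficient-condition} yields the desired ``if and only if''.

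Given $F\in H^p(\mathbf R^{n+1}_+)$, the second equality $F(x)=\int_{\mathbf R^n}E(x-\uly)F(\uly)d\uly$ is precisely the Cauchy reproducing formula of Proposition \ref{Cauchy-formula}. For the first equality, I would substitute $\hat F=\chi_+\hat F$ into the definition of $e^+(x,\ulxi)$ to rewrite
\[\int_{\mathbf R^n}e^+(x,\ulxi)\hat F(\ulxi)d\ulxi=\int_{\mathbf R^n}e^{2\pi i\langle\ulx,\ulxi\rangle}e^{-2\pi x_0|\ulxi|}\hat F(\ulxi)d\ulxi.\]
Since $e^{-2\pi x_0|\ulxi|}$ is the Fourier transform of the Poisson kernel $P_{x_0}(\ulx)$, and the product $e^{-2\pi x_0|\ulxi|}\hat F(\ulxi)$ is integrable thanks to the exponential decay in $\ulxi$, the convolution theorem and Fourier inversion identify the right-hand side with the Poisson integral $(P_{x_0}\ast F)(\ulx)$. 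Each component of the Hardy-space function $F$ is harmonic on $\mathbf R^{n+1}_+$ with NTBL equal to the corresponding component of the $L^p$ boundary datum, so Proposition \ref{stein p78} (together with the $L^1$-NTBL statement of Proposition \ref{stein p221 th6} at the endpoint $p=1$) forces $F(x_0+\ulx)=(P_{x_0}\ast F)(\ulx)$, finishing the proof.

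The main technical obstacle is ensuring that every Fourier manipulation is legitimate across the whole range $1\leq p\leq 2$, especially at $p=1$ where $\hat F$ is only known to be bounded and at $p=2$ where $\hat F\in L^2$; the exponential weight $e^{-2\pi x_0|\ulxi|}$ supplies the integrability required to run the convolution identity rigorously, and the NTBL/Poisson-integral identification is precisely what the propositions cited from \S2 provide.
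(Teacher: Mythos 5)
Your overall route is the intended one: the paper offers no separate proof of this corollary, and it is meant to follow exactly as you argue --- Theorems \ref{Necessary-condition} and \ref{Sufficient-condition} give the equivalence once one notes (via Hausdorff--Young) that $\hat F$ is a genuine $L^{p'}$ function for $1\leq p\leq 2$, so that testing against $\varphi\chi_-$ with scalar $\varphi$ vanishing near the origin converts $(\hat F,\psi)=0$ for $\psi\in\Psi^-(\mathbf R^n)$ into $\chi_-\hat F=0$ a.e., i.e.\ $\hat F=\chi_+\hat F$; the second displayed equality is Proposition \ref{Cauchy-formula}; and the identification of $\int_{\mathbf R^n}e^+(x,\ulxi)\hat F(\ulxi)d\ulxi$ with the Poisson integral $P_{x_0}*F(\ulx)$ is precisely the computation the paper itself performs in the $p=1$ part of the proof of Theorem \ref{Sufficient-condition}.

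The one soft spot is your justification of $F(x_0+\ulx)=(P_{x_0}*F)(\ulx)$ at the endpoint $p=1$. Proposition \ref{stein p78}(b) only yields that each harmonic component is the Poisson integral of a finite Borel measure, and Proposition \ref{stein p221 th6} is stated for the vector-valued space $\mathcal H^1(\mathbf R^{n+1}_+)$ (conjugate harmonic systems), not for general Clifford algebra-valued $H^1$ functions, so as cited it does not close the gap between ``Poisson integral of a measure'' and ``Poisson integral of the $L^1$ NTBL''. Two repairs are available inside the paper's toolkit: either use Proposition \ref{Cauchy-formula} together with the Fourier transform of the Cauchy kernel, $\bigl(E(x_0+\cdot)\bigr)^\wedge(\ulxi)=e^{-2\pi x_0|\ulxi|}\chi_+(\ulxi)$, so that once $\hat F=\chi_+\hat F$ is known one gets $\bigl(C_{x_0}(F)\bigr)^\wedge=e^{-2\pi x_0|\cdot|}\hat F=(P_{x_0}*F)^\wedge$ and hence $C_{x_0}(F)=P_{x_0}*F$ by injectivity of the Fourier transform on $L^1$; or use the $L^1$ convergence $\|F_{x_0}-F\|_{1}\to 0$ (already invoked in the paper's proof of Theorem \ref{Necessary-condition} for $p=1$) together with Lemma \ref{lifting up} and the semigroup identity for the lifted functions. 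With either fix your argument is complete; for $1<p\leq 2$ your appeal to Proposition \ref{stein p78}(a) componentwise is fine.
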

\begin{remark}
Theorem \ref{Necessary-condition} and Theorem \ref{Sufficient-condition} extend Proposition \ref{stein p65} and Proposition \ref{stein p221 th6}, respectively, to the non-vector-valued Hardy spaces cases. Moreover, the sufficiency and the necessity of the result for the case $p=\infty$ are not proved in either the vector-valued or the Clifford algebra-valued Hardy spaces in literatures.
\end{remark}

\medskip

\noindent{\textbf{Proof of Theorem \ref{Necessary-condition}:}}
We first prove the result for $1<p<\infty.$ Mainly using Proposition \ref{Cauchy-formula} and Proposition \ref{boundedness-Cauchy}, we have
\begin{align}\label{ineq1}
\begin{split}
|(F,\hat \psi )| &=\lim_{x_0\to 0}|(F(\cdot+x_0),\hat \psi)|\\
&=\lim_{x_0\to 0} |(C_{x_0}(F),\hat\psi)| \\
&\leq\overline{\lim_{x_0\to 0}} |(C_{x_0}(F-G),\hat\psi)| + \overline{\lim_{x_0\to 0}}|(C_{x_0}(G),\hat\psi )|\\
&\leq C_n \overline{\lim_{x_0\to 0}} ||C_{x_0}(F-G)||_{L^p} ||\hat\psi||_{L^q} + \overline{\lim_{x_0\to 0}}|( C_{x_0}(G),\hat\psi )|\\
&\leq C_n\overline{\lim_{x_0\to 0}} L||F-G||_{L^p}||\hat\psi||_{L^q} + \overline{\lim_{x_0\to 0}}|( C_{x_0}(G),\hat\psi )|,
\end{split}
\end{align}
where $q=\frac{p}{p-1},$ $L$ is given in Proposition \ref{boundedness-Cauchy}, and $C_n$ is a constant depending on $n$.
\def\uy{\underline{y}}\def\uz{\underline{z}}

For any $\epsilon>0,$ $G$ is chosen as a smooth function with compact support such that
\begin{align*}
||F-G||_{L^p}<\epsilon.
\end{align*}
Since $C_{x_0}(G)\in H^2(\mathbf R^{n+1}_+),$ there follows
\begin{eqnarray*}
C_{x_0}(G)(\ulx)&=&\int_{\bRn}E(x_0+\ulx-\uly)G(\uly)d\uly\\
&=& \int_{\bRn}E(\frac{x_0}{2}+\ulx-\uly)C_{\frac{x_0}{2}}(G)(\uly)d\uly\\
&=& \int_{\bRn}e^{2\pi i\langle \ulx,\ulxi\rangle}e^{-2\pi\frac{x_0}{2}|\uxi|}\chi_+(\uxi)(C_{\frac{x_0}{2}}(G))^\wedge(\uxi)d\uxi.
\end{eqnarray*}
This shows that
\[ (C_{x_0}(G))^\wedge(\ulxi)=e^{-2\pi\frac{x_0}{2}|\ulxi|}\chi_+(\ulxi)(C_\frac{x_0}{2}(G))^\wedge(\ulxi).\]
Therefore,
\[ (C_{x_0}(G)(\ulx),\hat\psi(\ulx) )=((C_{x_0}(G))^\wedge(\ulxi),\psi(\ulxi) )=(e^{-2\pi \frac{x_0}{2}|\ulxi|}\chi_+(\ulxi)(C_\frac{x_0}{2}(G))^\wedge(\ulxi),\psi(\ulxi))=0.\]

Hence, through (\ref{ineq1}), for any $\epsilon>0,$ we have
\begin{align*}
|(F,\hat \psi )|\leq C_nL\epsilon||\hat\psi||_{L^q}+0,
\end{align*}
which shows $(F,\hat\psi)=0.$

Now we deal with the case $p=1.$ We use the following Lemma (for the complex analysis setting, see also e.g. \cite{Deng-Qian}).

\begin{lemma}\label{lifting up}
If $F\in H^1(\mathbf R^{n+1}_+),$ then $F_{x_0}(\underline{\cdot})=F(x_0+\underline{\cdot})\in L^2(\bRn).$
\end{lemma}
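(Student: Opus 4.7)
The plan is to obtain a pointwise uniform bound of the form $\|F(x_0+\underline{\cdot})\|_\infty \leq C_n x_0^{-n} \|F\|_{H^1}$, and then interpolate against the $L^1$ control built into the $H^1$ norm via the trivial inequality $\|g\|_2^2 \leq \|g\|_\infty \|g\|_1$. Since $\|F_{x_0}\|_1 \leq \|F\|_{H^1}$ holds by definition, this immediately yields
\[
\|F_{x_0}\|_2^2 \;\leq\; \|F_{x_0}\|_\infty \,\|F_{x_0}\|_1 \;\leq\; \frac{C_n}{x_0^n} \,\|F\|_{H^1}^2 \;<\; \infty,
\]
so all the work is in producing the $L^\infty$ estimate.

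To produce that estimate I would reduce to the scalar case by expanding $F = \sum_T f_T \bfe_T$ into its finitely many Clifford components. Since $F$ is left-monogenic, the identity $\overline{D}D F = \Delta F = 0$ from the preliminaries shows each real-valued component $f_T$ is harmonic on $\mathbf{R}^{n+1}_+$. The trivial pointwise domination $|f_T(x_0+\underline{x})| \leq |F(x_0+\underline{x})|$ gives $\sup_{x_0>0}\|f_T(x_0+\underline{\cdot})\|_1 \leq \|F\|_{H^1}<\infty$, so Proposition \ref{stein p78}(b) applies: each $f_T$ is the Poisson integral of a finite Borel measure $d\mu_T$ on $\mathbf{R}^n$ whose total variation is controlled by $\|F\|_{H^1}$.

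With that representation in hand, the pointwise bound is a one-liner using the sup-norm of the Poisson kernel $P_{x_0}(\underline{y}) = c_n x_0/(x_0^2+|\underline{y}|^2)^{(n+1)/2}$, which satisfies $\|P_{x_0}\|_\infty \leq c_n/x_0^n$. Hence
\[
|f_T(x_0+\underline{x})| \;\leq\; \|P_{x_0}\|_\infty\, \|\mu_T\| \;\leq\; \frac{c_n}{x_0^n}\,\|F\|_{H^1},
\]
and summing the squares over the $2^n$ Clifford components gives $\|F(x_0+\underline{\cdot})\|_\infty \leq C_n x_0^{-n}\|F\|_{H^1}$, which was the missing ingredient.

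The only point requiring genuine care is the reduction to the scalar harmonic Hardy space: one must verify both that monogenicity of $F$ forces each $f_T$ to be harmonic (this is the identity $\bar{D}D=\Delta$ noted in the preliminaries) and that each component's $L^1$-in-$\underline{x}$ norm is finite (immediate from componentwise domination by $|F|$). Once Proposition \ref{stein p78}(b) is invoked, the rest is just the triviality that a convolution against an $L^\infty$ kernel of a finite measure is bounded. Nothing in the argument requires the existence of a non-tangential boundary limit of $F$ itself, which is important since at this stage such an $L^1$ boundary theory for Clifford-valued $H^1$ has not yet been established in the paper.
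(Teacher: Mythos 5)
Your proof is correct, and it shares the paper's overall skeleton: both arguments reduce the lemma to the elementary inequality $\|F_{x_0}\|_{L^2}^2\leq \|F_{x_0}\|_{L^\infty}\|F_{x_0}\|_{L^1}$ together with a pointwise bound of size $C_n x_0^{-n}\|F\|_{H^1}$. Where you differ is in how that $L^\infty$ bound is produced. The paper stays entirely elementary: it applies the mean-value property of the (componentwise harmonic, hence monogenic) function $F$ over the ball $B_x(x_0/2)$, and a Fubini-type estimate of the resulting average against $\sup_{y_0>0}\int_{\mathbf R^n}|F(y_0+\uly)|\,d\uly$ immediately gives $|F(x_0+\ulx)|\leq C x_0^{-n}\|F\|_{H^1}$, with no representation theorem needed. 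You instead pass through Proposition \ref{stein p78}(b): each component $f_T$ is harmonic with uniformly bounded $L^1$ slices, hence is the Poisson integral of a finite Borel measure $\mu_T$ with $\|\mu_T\|\lesssim\|F\|_{H^1}$, and the bound $\|P_{x_0}\|_{L^\infty}\leq c_n x_0^{-n}$ does the rest. This is correct (for complex Clifford coefficients one splits $f_T$ into real and imaginary parts, and the control of the total variation by the slice norms comes from the weak-$*$ argument underlying that proposition), and it buys you a structural fact the paper's proof does not record, namely that each component of an $H^1$ function is a Poisson integral of a measure; the cost is that you invoke a genuinely deeper quoted theorem where a two-line mean-value estimate suffices, and your constant carries the harmless factor $2^{n/2}$ from summing the components. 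Both routes, as you correctly note, avoid any use of boundary values of $F$, which at this stage of the paper have not yet been established for $H^1(\mathbf R^{n+1}_+)$.
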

\noindent{\bf Proof} Since $F$ is left-monogenic, by the mean-value theorem, we have
\begin{align*}
F(x_0+\ulx) = \frac{1}{V_\frac{x_0}{2}}\int_{B_{x}(\frac{x_0}{2})}F(y_0+\uly)dy,
\end{align*}
where $B_{x}(\frac{x_0}{2})$ is the ball centered at $x$ with radius $\frac{x_0}{2}$, $V_\frac{x_0}{2}$ is the volume of $B_x(\frac{x_0}{2})$. Then we have
\begin{align*}
|F(x_0+\ulx)| &\leq \frac{1}{V_\frac{x_0}{2}}\int_{B_{x}(\frac{x_0}{2})}|F(y_0+\uly)| dy\\
&\leq \frac{C}{x_0^{n+1}} \int_{\frac{x_0}{2}}^\frac{3x_0}{2}\int_{\mathbf R^n}|F(y_0+\uly)| d\uly dy_0\\
&\leq \frac{C}{x_0^n}\sup_{y_0>0}\int_{\mathbf R^n}|F(y_0+\uly)| d\uly,
\end{align*}
where $C$ is a constant.
This implies that $F_{x_0}(\ulx)\in L^\infty(\mathbf R^n).$ Then, we have
\begin{align*}
\int_{\mathbf R^n}|F(x_0+\ulx)|^2d\ulx &=\int_{\mathbf R^n} |F(x_0+\ulx)| |F(x_0+\ulx)|d\ulx\\
&\leq \sup_{\ulx\in \mathbf R^n}|F(x_0+\ulx)| \int_{\mathbf R^n}|F(x_0+\ulx)| d\ulx<\infty.
\end{align*}
The proof of the lemma is complete.
\medskip

Now we continue to prove Theorem \ref{Necessary-condition}. Let $F\in H^1(\mathbf R^{n+1}_+), x_0>0.$ Then we have
$$(F, \hat{\psi})=(F-F_{x_0},\hat{\psi})+(F_{x_0},\hat{\psi}).$$
For any $\epsilon>0,$ for a suitably chosen $x_0>0,$ we have
\[|(F-F_{x_0},\hat{\psi})|\leq C_n\|F-F_{x_0}\|_1 \|\hat{\psi}\|_\infty \leq C_n\epsilon \|\hat{\psi}\|_\infty.\]
On the other hand,
as in the proof for the case $1<p<\infty,$
\[ (F_{x_0}, \hat{\psi})=0.\]
Hence
\[ |(F, \hat{\psi})|\leq C_n\epsilon \|\hat{\psi}\|_\infty\]
for arbitrary $\epsilon >0.$ That shows $(F, \hat{\psi})=0.$

For $p=\infty,$ we will mainly adopt the technique used in \cite{QXYYY}.
In Lemma \ref{formula-infty}, we will show that for $F\in H^\infty(\mathbf R^{n+1}_+)$,
\begin{align*}
F(\ulx)=HF(\ulx) +c,
\end{align*}
where $c$ is a constant. It is easily shown that
\begin{align*}
(c,\hat \psi)=0,\quad \psi\in \Psi^-(\mathbf R^n).
\end{align*}
Hence, in the following we only need to consider
\begin{align*}
F(\ulx)=\frac{1}{2}(I+H)F(\ulx)=\frac{1}{2}\sum_{T}(I+H)f_T(\ulx)\bfe_T.
\end{align*}
We first consider that $\psi\in \Psi^-_{scalar}(\mathbf R^n).$ We have $\psi(\ulxi)=\widetilde\psi_0(\ulxi)\chi_-(\ulxi)=\psi_0(\ulxi)+\sum_{j=1}^n\psi_j(\ulxi)\bfe_j,$ where $\widetilde\psi_0$ is a scalar-valued Schwartz function. In particular, we note that
\begin{align*}
\hat \psi (\ulx)&=\int_{\mathbf R^n}\widetilde\psi_0(\ulxi)\frac{1}{2}(1-i\frac{\ulxi}{|\ulxi|})e^{-2\pi i \langle\ulx,\ulxi\rangle}d\ulxi\\
&=\int_{\mathbf R^n}\widetilde\psi_0(-\ulxi)\frac{1}{2}(1+i\frac{\ulxi}{|\ulxi|})e^{2\pi i \langle\ulx,\ulxi\rangle}d\ulxi,
\end{align*}
which means that $\hat\psi(\ulx)$ is the NTBL function of functions in $\mathcal H^1(\mathbf R^{n+1}_+).$
In the following we first accept, and use this property to prove that $(F,\hat \psi)=0$ for $\psi\in \Psi^-(\mathbf R^n).$
 Since $\hat\psi\in \mathcal H^1(\mathbf R^{n+1}_+)$ and $(I+H)f_T(\ulx)\in BMO(\mathbf R^n),$
$
((I+H)f_T(\ulx),\hat\psi)
$
is well-defined. As shown in \cite[page 146]{Fefferman-Stein}, we have
\begin{align*}
\int_{\mathbf R^n}\hat\psi(\ulx)R_j(f_T)(\ulx) d\ulx &=\sum_{k=0}^n\int_{\mathbf R^n}\bfe_k\hat \psi_k(\ulx)R_j(f_T)(\ulx)d\ulx\\
&=-\sum_{k=0}^n \int_{\mathbf R^n}\bfe_k R_j(\hat\psi_k)(\ulx)f_T(\ulx)d\ulx\\
&=-\int_{\mathbf R^n}R_j(\hat\psi) f_T(\ulx)d\ulx.
\end{align*}
Hence we have
\begin{align*}
\int_{\mathbf R^n}\hat\psi(\ulx)(f_T(\ulx)-\sum_{j=1}^n R_j(f_T)(\ulx)\bfe_j)d\ulx
&=\int_{\mathbf R^n}(\hat\psi(I+\sum_{j=1}^n R_j\bfe_j))(\ulx)f_T(\ulx) d\ulx. \\
\end{align*}
One can easily show that $\hat\psi(I+\sum_{j=1}^n R_j \bfe_j)=0.$ In fact, since the Fourier multiplier of $R_j$ is $-i\frac{\xi_j}{|\ulxi|}$, we have
\begin{align*}
(\hat\psi(I+\sum_{j=1}^n R_j\bfe_j))^\vee(\ulxi) =\psi(\ulxi)(1+i\sum_{j=1}^n \frac{\xi_j \bfe_j}{|\ulxi|})=\widetilde\psi_0(\ulxi)\frac{1}{2}(1-i\frac{\ulxi}{|\ulxi|})(1+i\frac{\ulxi}{|\ulxi|})=0.
\end{align*}
Thus $\hat\psi(I+\sum_{j=1}^n R_j\bfe_j)=0,$ and hence $((I+H)f_T(\ulx),\hat \psi)=0$ for $\psi\in \Psi^-_{scalar}(\mathbf R^n).$ Consequently, $(F,\hat\psi)=0$ for $\psi\in \Psi^-_{scalar}(\mathbf R^n).$ For $\psi\in \Psi^-(\mathbf R^n),$ we have
\begin{align*}
(F,\hat\psi)=\sum_{S}\bfe_S(F,(\psi_S\chi_-)^\wedge).
\end{align*}
Then, for each $S,$ we have
$$
(F,(\psi_S\chi_-)^\wedge)=0
$$
by applying the above argument. We have $(F,\hat\psi)=0$ for $\psi\in \Psi^-(\mathbf R^n).$

\quad \hfill$\Box$\vspace{2ex}

\noindent{\textbf{Proof of Theorem \ref{Sufficient-condition}:}}
We first consider the case $p=1.$ Define
\begin{align*}
\Phi(x_0+\ulx)=\int_{\mathbf R^n}e^+(x,\ulxi)\hat F(\ulxi)d\ulxi
\end{align*}
and
\begin{align*}
G_{x_0}(\ulx)=G(x_0+\ulx)=\int_{\mathbf R^n}P_{x_0}(\ulx-\uly)F(\uly)d\uly.
\end{align*}

We then have
\begin{align*}
\Phi(x_0+\ulx) = \int_{\mathbf R^n}e^+(x,\ulxi)\hat F(\ulxi)d\ulxi= \int_{\mathbf R^n}P_{x_0}(\ulx-\uly)F(\uly)d\uly= G(x_0+\ulx).
\end{align*}
 We also note that $G_{x_0}\in L^1(\mathbf R^n)$ since
\begin{align*}
||G_{x_0}||_{L^1}\leq C_n||F||_{L^1}||P_{x_0}||_{L^1}<\infty.
\end{align*}
By interchanging the derivatives with the integral, we can show that $\Phi$ is left-monogenic since $e^+(x,\ulxi)$ is monogenic.
Thus $\Phi(x)\in H^1(\mathbf R^{n+1}_+).$

Next we consider the case for $L^p(\mathbf R^{n}), 1<p\leq\infty.$ By the assumption, we have
\begin{align}\label{eq1}
0=(\hat F,\psi)=( (F^+)^\wedge+(F^-)^\wedge,\psi)=((F^-)^\wedge,\psi),\quad \text{for all }\psi\in \Psi^-(\mathbf R^n),
\end{align}
where $F^+=\frac{1}{2}(I+H)F$ and $F^-=\frac{1}{2}(I-H)F,$ and $((F^+)^\wedge,\psi)=0$ follows from the proof of Theorem \ref{Necessary-condition} (Note that this argument only works for $1<p\leq \infty,$ not including $p=1$). Consequently, we can have, for $\varphi$ being a scalar-valued Schwartz function taking zero in some neighborhood of the origin,
\begin{align*}
((F^-)^\wedge,\varphi)=((F^-)^\wedge,\varphi\chi_+)+((F^-)^\wedge,\varphi\chi_-)=0,
\end{align*}
where $((F^-)^\wedge,\varphi\chi_+)=0$ follows from the proof of Theorem \ref{Necessary-condition}, and $((F^-)^\wedge,\varphi\chi_-)=0$ is given by (\ref{eq1}).
 This implies that $(F^-)^\wedge$ is either zero or a distribution with support at the origin. For the latter case, $(F^-)^\wedge$ has to be a finite linear combination of the partial derivatives of the Dirac delta function (see e.g. \cite{Hor,DMitrea}), which contradicts to $F\in L^p(\mathbf R^n),1<p<\infty.$ Thus, for $1<p<\infty,$ $(F^-)^\wedge=0$, and then $F^-=0.$ For $p=\infty,$ $F^-$ is a constant $c.$ Then, for $1<p<\infty,$ we have $F=F^+,$ and for $p=\infty,$ $F=F^++c$ being the NTBL function of some functions in $H^p(\mathbf R^{n+1}_+).$
\quad \hfill$\Box$\vspace{2ex}

\medskip

\begin{lemma}\label{formula-infty}
For $F\in H^\infty(\mathbf R^{n+1}_+),$ we have
\begin{align}\label{Cauchy-infty}
F(\ulx)=HF(\ulx)+c,
\end{align}
where $c$ is a constant.
\end{lemma}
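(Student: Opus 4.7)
The plan is to establish the identity via a subtracted Cauchy reproducing formula. The obstruction to applying Proposition~\ref{Cauchy-formula} directly is that the kernel $E(x-\underline{y})=O(|\underline{y}|^{-n})$ at infinity is not absolutely integrable against a bounded function $F$; this non-integrability is precisely the source of the ambiguity constant $c$, in parallel with the fact that the Riesz transforms on $L^\infty$ are only well-defined modulo constants.

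First I would regularize: for $\tau>0$ set $F_\tau(x_0+\underline{x}):=F(x_0+\tau+\underline{x})$. Then $F_\tau$ is left-monogenic on a neighborhood of $\overline{\mathbf R^{n+1}_+}$, $\|F_\tau\|_\infty\le\|F\|_{H^\infty}$, and $F_\tau(\underline{x})\to F(\underline{x})$ as $\tau\to 0^+$ at almost every $\underline{x}$. Fix a reference point $x_\ast\in\mathbf R^{n+1}_+$ and form the modified kernel
\[
K(x,\underline{y}):=E(x-\underline{y})-E(x_\ast-\underline{y}),
\]
which by a mean-value estimate satisfies $|K(x,\underline{y})|=O(|\underline{y}|^{-(n+1)})$ as $|\underline{y}|\to\infty$, hence is integrable against $L^\infty$ data. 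Applying Stokes' theorem for Clifford-valued monogenic functions on the truncated half-ball $B_R\cap\{y_0>0\}$ (using the smoothness of $F_\tau$ up to the boundary), and letting $R\to\infty$---the hemispherical contribution vanishing thanks to the improved decay of $K$---would yield
\[
F_\tau(x)-F_\tau(x_\ast)=\int_{\mathbf R^n} K(x,\underline{y})\,F_\tau(\underline{y})\,d\underline{y}.
\]
Dominated convergence with the majorant $\|F\|_\infty\,|K(x,\underline{y})|$ then lets one pass to $\tau\to 0^+$, producing the same identity with $F$ in place of $F_\tau$ on both sides.

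Finally I would take the non-tangential limit $x\to\underline{x}_0\in\mathbf R^n$. The piece $\int E(x_\ast-\underline{y})F(\underline{y})\,d\underline{y}$ is a constant $c_\ast$ independent of $\underline{x}_0$. For the remaining piece $\int E(x-\underline{y})F(\underline{y})\,d\underline{y}$, a Plemelj--Sokhotski jump analysis---reading the principal-value part of $E(x-\underline{y})$ through the revised Riesz transforms on $L^\infty$ as at the end of \S 2.1---delivers $\tfrac{1}{2}(I+H)F(\underline{x}_0)$ modulo a constant. Rearranging gives $\tfrac{1}{2}F(\underline{x}_0)=\tfrac{1}{2}HF(\underline{x}_0)+c_0$, which is equivalent to $F(\underline{x})=HF(\underline{x})+c$.

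The main obstacle will be making the Plemelj--Sokhotski jump rigorous for the $L^\infty$ density: the principal-value integrals defining $R_jF$ only make sense modulo constants, so one must verify that the subtracted Cauchy integral picks out one coherent representative, then identify the constants produced at each step and absorb them into the single scalar $c$. The convergence $F_\tau(\underline{x})\to F(\underline{x})$ on the boundary and the Stokes' theorem computation for $F_\tau$ are routine once the shift $\tau\bfe_0$ is observed to place $F_\tau$ in a strip of regularity across $\{x_0=0\}$.
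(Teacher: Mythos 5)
Your route is essentially correct, but it is genuinely different from the paper's. You prove a \emph{subtracted} Cauchy representation $F(x)-F(x_\ast)=\int_{\mathbf R^n}\bigl(E(x-\uly)-E(x_\ast-\uly)\bigr)F(\uly)\,d\uly$ for bounded left-monogenic $F$ (the vertical shift $F_\tau$, the $O(|\uly|^{-(n+1)})$ decay of the subtracted kernel, the vanishing $O(R^{-1})$ hemispherical term, and dominated convergence in $\tau$ are all sound, since each component of $F$ is the Poisson integral of an $L^\infty$ function and so has non-tangential limits a.e.), and you then recover $F=HF+c$ from the boundary jump of this modified Cauchy integral. The paper instead argues by duality: it pairs $(I-H)F$ against scalar Schwartz functions $\phi$ whose Fourier transform vanishes near the origin, rewrites the pairing via $\psi=(\hat\phi/(2\pi|\ulxi|))^\vee$ and the Fefferman--Stein adjoint identity so that monogenicity ($DF=0$ at height $x_0>0$) kills the main term, and concludes that $((I-H)F)^\vee$ is supported at the origin, hence $(I-H)F$ is a constant. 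The trade-off: the paper's argument never needs any a.e.\ jump analysis for $L^\infty$ data, only the $BMO$--$H^1$ adjoint identity and the support-at-the-origin argument, while your argument yields a more concrete by-product (a modified Cauchy representation of $H^\infty$ functions, the Clifford analogue of the modified Poisson/conjugate-Poisson representation used by Fefferman--Stein). The one place where your outline still owes a real proof is exactly the step you flag: splitting $E=\frac12(P_{x_0}+Q_{x_0})$ into scalar and vector parts, you must show that the subtracted conjugate-Poisson part converges non-tangentially a.e.\ to $HF$ built from the revised Riesz transforms of \S 2.1, up to an additive constant. This is standard Calder\'on--Zygmund material rather than a flaw --- the difference between the conjugate Poisson integral at height $x_0$ and the truncated Riesz integral at radius $x_0$ is given by a mean-zero kernel dominated by an approximate identity, hence tends to $0$ at Lebesgue points of $F$, and the two renormalizations (subtraction at the interior reference point $x_\ast$ versus the $\chi_{\{|\uly|>1\}}$ subtraction in the revised kernels) differ by an absolutely convergent integral independent of $\ulx$, i.e.\ by a constant --- but it must be written out for the proof to be complete, and it is precisely the work the paper's distributional argument was designed to avoid.
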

\begin{proof}
Denote by $S_0(\mathbf R^n),$ where $\phi\in S_0(\mathbf R^n)$ is a scalar-valued Schwartz function such that $\hat\phi$ takes the zero value in some neighborhood of the origin.
To show (\ref{Cauchy-infty}), it suffices to show that
\begin{align}\label{tem-ineq1}
((I-H)F(\ulx), \phi(\ulx))=0
\end{align}
holds for all $\phi\in S_0(\mathbf R^n).$ In fact, we have
$$((I-H)F(\ulx),\phi(\ulx))=(((I-H)F)^\vee(\ulxi),\hat\phi(\ulxi)),$$ which means that $((I-H)F)^\vee$ is either zero or a distribution with support at the origin. Thus $(I-H)F(\ulx)=c,$ and hence $F(\ulx)=HF(\ulx)+c,$ where $c$ is a constant.

Let $F=\sum_{T}f_T\bfe_T,$ and $\psi(\ulx)=\left(\frac{\hat\phi(\ulxi)}{2\pi|\ulxi|}\right)^\vee(\ulx).$ Moreover, we have
\begin{align*}
-2\pi |\ulxi|e^{-2\pi x_0|\ulxi|}\hat\psi(\ulxi)=-e^{-2\pi x_0|\ulxi|}\hat\phi(\ulxi),
\end{align*}
 and hence,
 \begin{align*}
 (\partial_0P_{x_0}*\psi)(\ulx)=-P_{x_0}*\phi(\ulx).
 \end{align*}

Note that for $\phi\in S_0(\mathbf R^n),$ $\psi$ is still a Schwartz function. For each $T,$ we have
\begin{align}\label{ineq2}
\begin{split}
&((I+\sum_{j=1}^n \bfe_jR_j)f_T(x_0+\ulx),-\phi(\ulx))\\
&=((I+\sum_{j=1}^n\bfe_jR_j)f_T(x_0+\ulx), (-2\pi|\ulxi|\hat\psi(\ulxi))^\vee(\ulx))\\
&=(f_T(x_0+\ulx), (-2\pi|\ulxi|\hat\psi(\ulxi))^\vee(\ulx)(I-\sum_{j=1}^n R_j\bfe_j))\\
&=(f_T(x_0+\ulx), [(-2\pi|\ulxi|\hat\psi(\ulxi))(1+\sum_{j=1}^n i\frac{\xi_j\bfe_j}{|\ulxi|})]^\vee(\ulx))\\
&=(f_T(x_0+\ulx), -\phi(\ulx))+(f_T(x_0+\ulx), [\sum_{j=1}^n -2\pi i\xi_j\hat\psi(\ulxi)\bfe_j]^\vee(\ulx))\\
&=(f_T(\ulx),-P_{x_0}*\phi(\ulx) )+(f_T(x_0+\ulx), [\sum_{j=1}^n -2\pi i\xi_j\hat\psi(\ulxi)\bfe_j]^\vee(\ulx))\\
&=(f_T(\ulx),(\partial_0P_{x_0}*\psi)(\ulx) )+(f_T(x_0+\ulx), -\sum_{j=1}^n \partial_j\psi(\ulx)\bfe_j).
\end{split}
\end{align}
In the second equality we have used the result proved in \cite[page 146]{Fefferman-Stein}, while the other equalities follow from the properties of the Fourier transform and the Fourier multiplier of the Riesz transformations.
Since $\psi$ is a Schwartz function, for any $\epsilon>0,$ we can find $\widetilde\psi$, a $C^\infty$- function with compact support, such that $||\psi-\widetilde \psi||_{L^1}<\epsilon$ and $||\partial_j\psi-\partial_j\widetilde \psi||_{L^1}<\epsilon,1\leq j\leq n.$ We also note that
\begin{align*}
\partial_0P_{x_0}(\ulx)=\frac{1}{\sigma_n}\frac{\partial}{\partial x_0}\frac{x_0}{(x_0^2+|\ulx|^2)^\frac{n+1}{2}}=\frac{1}{\sigma_n}\frac{-nx_0^2+|\ulx|^2}{(x_0^2+|\ulx|^2)^{\frac{n+1}{2}+1}}.
\end{align*}
Thus,
\begin{align*}
&|(f_T(\ulx),(\partial_0P_{x_0}*(\psi-\widetilde\psi))(\ulx) )|\\
&\leq \frac{C^\prime}{\sigma_n}\int_{\mathbf R^n}\int_{\mathbf R^n}\frac{x_0^2}{(x_0^2+|\ulx-\uly|^2)^{\frac{n+1}{2}+1}}|\psi(\uly)-\widetilde\psi(\uly)|d\uly d\ulx\\
&+\frac{C^{\prime\prime}}{\sigma_n}\int_{\mathbf R^n}\int_{\mathbf R^n}\frac{|\ulx-\uly|^2}{(x_0^2+|\ulx-\uly|^2)^{\frac{n+1}{2}+1}}|\psi(\uly)-\widetilde\psi(\uly)|d\uly d\ulx\\
&\leq C^\prime||P_{x_0}*(|\psi-\widetilde\psi|)||_{L^1}+\frac{C^{\prime\prime}}{x_0}||P_{x_0}*(|\psi-\widetilde\psi|)||_{L^1}\\
&\leq (C^\prime+\frac{C^{\prime\prime}}{x_0})||\psi-\widetilde\psi||_{L^1},\\
&\leq C\epsilon.
\end{align*}
where $C^\prime,C^{\prime\prime}$ and $C$ are constants.
Hence,
\begin{align*}
(\ref{ineq2})&=((\partial_0P_{x_0}*f_T)(\ulx),\widetilde\psi(\ulx) )+(\sum_{j=1}^n\partial_jf_T(x_0+\ulx)\bfe_j,\widetilde\psi(\ulx))\\
&+(f_T(\ulx),\partial_0P_{x_0}*(\psi-\widetilde\psi)(\ulx))+(f_T(x_0+\ulx), \sum_{j=1}^n-(\partial_j\psi(\ulx)-\partial_j\widetilde\psi(\ulx))\bfe_j)\\
&=((\partial_0+\sum_{j=1}^n\partial_j\bfe_j)f_{T}(x_0+\ulx),\widetilde\psi(\ulx))\\
&+(f_T(\ulx),\partial_0P_{x_0}*(\psi-\widetilde\psi)(\ulx))+(f_T(x_0+\ulx), \sum_{j=1}^n-(\partial_j\psi(\ulx)-\partial_j\widetilde\psi(\ulx))\bfe_j)\\
&=(Df_T(x_0+\ulx),\widetilde\psi(\ulx))\\
&+(f_T(\ulx),\partial_0P_{x_0}*(\psi-\widetilde\psi)(\ulx))+(f_T(x_0+\ulx), \sum_{j=1}^n-(\partial_j\psi(\ulx)-\partial_j\widetilde\psi(\ulx))\bfe_j).\\
\end{align*}
Then, we have
\begin{align*}
&((I+\sum_{j=1}^n \bfe_jR_j)F(x_0+\ulx),-\phi(\ulx))\\
&=(DF(x_0+\ulx),\widetilde\psi(\ulx))\\
&+(F(\ulx),\partial_0P_{x_0}*(\psi-\widetilde\psi)(\ulx))+(F(x_0+\ulx), \sum_{j=1}^n-(\partial_j\psi(\ulx)-\partial_j\widetilde\psi(\ulx))\bfe_j).
\end{align*}
Since $F$ is left-monogenic, we have
\begin{align*}
&((I+\sum_{j=1}^n \bfe_jR_j)F(x_0+\ulx),-\phi(\ulx))\\
&=(F(\ulx),\partial_0P_{x_0}*(\psi-\widetilde\psi)(\ulx))+(F(x_0+\ulx), \sum_{j=1}^n-(\partial_j\psi(\ulx)-\partial_j\widetilde\psi(\ulx))\bfe_j).
\end{align*}
Thus, for any $\epsilon>0,$ we have
\begin{align*}
&|((I+\sum_{j=1}^n \bfe_jR_j)F(x_0+\ulx),-\phi(\ulx))|\\
&\leq|(F(\ulx),\partial_0P_{x_0}*(\psi-\widetilde\psi)(\ulx))|+|(F(x_0+\ulx), \sum_{j=1}^n-(\partial_j\psi(\ulx)-\partial_j\widetilde\psi(\ulx))\bfe_j)|\\
&\leq C^{\prime\prime\prime} (||\psi-\widetilde\psi||_{L^1}+\sum_{j=1}^n||\partial_{j}\psi-\partial_j\widetilde\psi)||_{L^1})\\
&\leq (n+1)C^{\prime\prime\prime}\epsilon,
\end{align*}
where $C^{\prime\prime\prime}$ is a constant.
Hence,
\begin{align*}
((I+\sum_{j=1}^n \bfe_jR_j)F(x_0+\ulx),-\phi(\ulx))=0.
\end{align*}
 Finally, we have
$$
((I+\sum_{j=1}^n\bfe_j R_j)F(\ulx),\phi(\ulx))=\lim_{x_0\to 0}((I+\sum_{j=1}^n \bfe_jR_j)F(x_0+\ulx),\phi(\ulx))=0
$$
for all $\phi\in S_0(\mathbf R^n).$
The proof is completed.
\end{proof}

\begin{remark}
If $F\in H^\infty(\mathbf R^{n+1}_+)$ is vector-valued, the above result becomes
\begin{align*}
F(\ulx)=f_0(\ulx)-\sum_{j=1}^n R_j(f_0)(\ulx) \bfe_j+c,
\end{align*}
which is a special case of the celebrated characterization of $BMO$ proved by Fefferman and Stein in \cite{Fefferman-Stein}.

We note that the proof of the above lemma holds for $H^p(\mathbf R^{n+1}_+),1\leq p\leq\infty.$ This means that we have given an alternative proof of the Cauchy integral formula for $F\in H^p(\mathbf R^{n+1}_+),1\leq p< \infty,$ i.e.,
\begin{align*}
F(x_0+\ulx)=\int_{\mathbf R^n}E(x-\uly)F(\uly)d\uly,
\end{align*}
and for $F\in H^\infty(\mathbf R^{n+1}_+),$
\begin{align*}
F(x_0+\ulx)=\int_{\mathbf R^n}E(x-\uly)F(\uly)d\uly+c,
\end{align*}
where $E(x-\uly)$ is revised corresponding to the revision of $R_j,1\leq j\leq n.$

When $p=2,$ we can prove $F(\ulx)=HF(\ulx)$ in the normal sense. The proof of the above lemma is motivated by the case $p=2.$ In fact, a direct computation of $DF=0$ yields
\begin{align}\label{DF}
(\partial_0 f_T(x_0+\ulx) +\sum_{j=1}^n (-1)^{l_j} \partial_jf_{T_j}(x_0+\ulx))\bfe_T=0,\quad \text{ for all }T,
\end{align}
where $T_j$ satisfies $(-1)^{l_j}\bfe_j\bfe_{T_j}=\bfe_T.$ We note that
$$l_j=N(j\cap T_j)+P(j,T_j),$$
where $N(A)=\#A$ denotes the number of elements in some set $A,$ and
$$
P(j,T_j)=\#\{k;j>k,k\in T_j\}.
$$
Taking the Fourier transform on $\ulx,$ we have
\begin{align*}
(\partial_0 \hat f_T(x_0+\ulxi) +\sum_{j=1}^n (-1)^{l_j} (2\pi i\xi_j)\hat f_{T_j}(x_0+\ulxi))\bfe_T=0.
\end{align*}
Using the fact that
\begin{align*}
\lim_{x_0\to 0}\partial_0 \hat f_T(x_0+\ulxi)=\lim_{x_0\to 0}\partial_0(P_{x_0}*f_T)^\wedge(\ulxi)=\lim_{x_0\to 0}\partial_0(e^{-2\pi x_0|\ulxi|}\hat f_T(\ulxi))=-2\pi|\ulxi|\hat f_T(\ulxi),
\end{align*}
we have
\begin{align*}
(\hat f_T(\ulxi) +\sum_{j=1}^n (-1)^{l_j} (-i\frac{\xi_j}{|\ulxi|})\hat f_{T_j}(\ulxi))\bfe_T=0,
\end{align*}
and consequently,
\begin{align*}
( f_T(\ulx) +\sum_{j=1}^n (-1)^{l_j} R_j(f_{T_j})(\ulx))\bfe_T=0.
\end{align*}
This means that $F(\ulx)=HF(\ulx).$

We also note that the system (\ref{DF}) is indeed a generalization of the conjugate harmonic system (\ref{GCR}). When $F$ is vector-valued, the system (\ref{DF}) is reduced to $(\ref{GCR})$ (see the simple argument given in \S 2). Consequently, for $H^p(\mathbf R^{n+1}_+)\ni F=f_0+\sum_{j=1}^n f_j\bfe_j,$ $F=HF$ is then reduced to $f_j=-R_j(f_0),1\leq j\leq n$, which is the classical result for the conjugate harmonic system (see Propositions \ref{stein p65}$-$\ref{stein p221 th6}).
\end{remark}

\section{Analogue in Bergman Space}
In this section we prove a representation formula for the functions in the Clifford algebra-valued Bergman space in the upper half-space as an application of our results in the previous sections. For the analogous result in the complex analysis setting, we refer to \cite{Genchev} and \cite{BG}.
\begin{defn}
Let $F(x)=\sum_{T}f_T(x)\bfe_T,$ where $x\in \mathbf R^{n+1}_+.$ If $F$ is left-monogenic on $\mathbf R^{n+1}_+,$ and satisfies
\begin{align*}
||F||_{A^p}^p=\int_{0}^\infty \int_{\mathbf R^n}|F(x_0+\ulx)|^pd\ulx dx_0<\infty,\quad 1\leq p<\infty,
\end{align*}
then we say that $F$ belongs to the Bergman space $A^p(\mathbf R^{n+1}_+).$
\end{defn}

As an application of the Fourier spectrum characterization of $H^p(\mathbf R^{n+1}_+),1\leq p<\infty,$ we have
\begin{theorem}\label{bergman-2}
For $F\in A^p(\mathbf R^{n+1}_+),1\leq p\leq 2,$ there exists a function $G\in L^q (\mathbf R^n), q=\frac{p}{p-1},$ such that
\begin{align*}
F(x)=\int_{\mathbf R^n}e^+(x,\ulxi)G(\ulxi)d\ulxi.
\end{align*}
Moreover, for $1<p\leq 2,$
\begin{align*}
\left(\int_{\mathbf R^n}\frac{|\chi_+(\ulxi)G(\ulxi)|^q}{(2\pi p|\ulxi|)^\frac{q}{p}} d\ulxi\right)^\frac{1}{q} \leq ||F||_{A^p}<\infty,
\end{align*}
and for $p=1,$
\begin{align*}
\sup_{\ulxi\in \mathbf R^n}\frac{|\chi_+(\ulxi)G(\ulxi)|}{2\pi|\ulxi|}\leq ||F||_{A^1}<\infty.
\end{align*}
\end{theorem}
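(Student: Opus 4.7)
The plan is to reduce Theorem \ref{bergman-2} to Corollary \ref{representation-formula} by sliding the Bergman function inward and applying the Hardy-space representation to each vertical translate. First I would show that every $F_s(x):=F(s+x)$, $s>0$, lies in $H^p(\mathbf R^{n+1}_+)$. Since each component $f_T$ of the left-monogenic $F$ is harmonic, a direct computation using the Cauchy-Schwarz inequality $|\nabla(|F|^2)|^2\leq 2|F|^2\Delta(|F|^2)$ shows that $|F|^p$ is subharmonic for every $p\geq 1$. The sub-mean-value inequality on the ball $B_{(t,\ulx)}(t/2)\subset\mathbf R^{n+1}_+$, together with Fubini in $\ulx$, then yields
\begin{align*}
\int_{\mathbf R^n}|F(t+\ulx)|^p\,d\ulx\leq \frac{C}{t}\|F\|_{A^p}^p,\qquad t>0,
\end{align*}
so $\|F_s\|_{H^p}\leq Cs^{-1/p}\|F\|_{A^p}<\infty$. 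Corollary \ref{representation-formula} applied to $F_s$ then gives, for each $s>0$,
\begin{align*}
F(s+x_0+\ulx)=\int_{\mathbf R^n}e^{2\pi i\langle\ulx,\ulxi\rangle}e^{-2\pi x_0|\ulxi|}\chi_+(\ulxi)\widehat{F(s+\cdot)}(\ulxi)\,d\ulxi.
\end{align*}

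Next I would construct a single, $s$-independent Fourier symbol $G$. Taking Fourier transforms in the representation above and using $\chi_+^2=\chi_+$, one obtains the consistency relation $\chi_+(\ulxi)\widehat{F(s_1+s_2+\cdot)}(\ulxi)=e^{-2\pi s_2|\ulxi|}\chi_+(\ulxi)\widehat{F(s_1+\cdot)}(\ulxi)$; equivalently, the expression $e^{2\pi s|\ulxi|}\chi_+(\ulxi)\widehat{F(s+\cdot)}(\ulxi)$ is independent of $s>0$. Denote this common value by $G(\ulxi)$. Since $\chi_+G=G$, substituting $\chi_+(\ulxi)\widehat{F(s+\cdot)}(\ulxi)=e^{-2\pi s|\ulxi|}G(\ulxi)$ back into the slice representation gives $F(s+x_0+\ulx)=\int e^+(s+x_0+\ulx,\ulxi)G(\ulxi)\,d\ulxi$ for every $s,x_0>0$, which by renaming yields the desired identity $F(x)=\int_{\mathbf R^n}e^+(x,\ulxi)G(\ulxi)\,d\ulxi$.

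For the norm estimates, Hausdorff-Young applied slice-wise yields
\begin{align*}
\|e^{-2\pi s|\ulxi|}\chi_+(\ulxi)G(\ulxi)\|_{L^q}=\|\chi_+\widehat{F(s+\cdot)}\|_{L^q}\leq \|F(s+\cdot)\|_{L^p}.
\end{align*}
For $p=1$, $q=\infty$, the resulting pointwise inequality $|\chi_+(\ulxi)G(\ulxi)|\,e^{-2\pi s|\ulxi|}\leq \|F(s+\cdot)\|_{L^1}$, integrated in $s$ with $\int_0^\infty e^{-2\pi s|\ulxi|}\,ds=(2\pi|\ulxi|)^{-1}$, delivers the first bound. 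For $1<p\leq 2$, raising Hausdorff-Young to the $p$-th power, integrating in $s$, and using $\|f\|_{L^q}^p=\||f|^p\|_{L^{q/p}}$ gives
\begin{align*}
\int_0^\infty \bigl\||e^{-2\pi s|\ulxi|}\chi_+G|^p\bigr\|_{L^{q/p}}\,ds\leq \|F\|_{A^p}^p.
\end{align*}
Since $q/p\geq 1$, Minkowski's integral inequality pulls the $s$-integral inside the $L^{q/p}$ norm, and $\int_0^\infty e^{-2\pi sp|\ulxi|}\,ds=(2\pi p|\ulxi|)^{-1}$ produces precisely the weight $(2\pi p|\ulxi|)^{-q/p}$ of the stated estimate after taking the $q$-th root.

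The main obstacle is the first step: verifying the subharmonicity of $|F|^p$ in the Clifford-valued setting and extracting the $s^{-1/p}$ slice bound via the sub-mean-value inequality and Fubini. Once $F_s\in H^p(\mathbf R^{n+1}_+)$ is in hand, the construction of $G$ via the semigroup consistency and the subsequent norm bounds reduce to routine manipulations with the Hardy-space representation of Corollary \ref{representation-formula} and standard Fourier-analytic inequalities (Hausdorff-Young coupled with Minkowski).
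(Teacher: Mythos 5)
Your proposal is correct and follows essentially the same route as the paper's proof: subharmonicity of $|F|^p$ and the sub-mean-value inequality give the $t^{-1/p}$ slice bound placing each vertical translate in $H^p(\mathbf R^{n+1}_+)$, the Hardy-space representation together with the semigroup consistency in the vertical variable produces the $s$-independent symbol $G$, and Hausdorff--Young combined with Minkowski's integral inequality yields the weighted estimates. There is no substantive difference from the paper's argument.
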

\begin{theorem}\label{bergman-p}
For $F\in A^p(\mathbf R^{n+1}_+),2<p<\infty,$ we have that, for $x_0>0,$ there holds
\begin{align*}
(F(x_0+\cdot),\hat\psi)=0
\end{align*}
for $\psi\in \Psi^-(\mathbf R^n).$
\end{theorem}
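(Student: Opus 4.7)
The plan is to reduce this Bergman-space statement to the Hardy-space Theorem \ref{Necessary-condition} by a translation-in-$x_0$ trick. Fix $x_0>0$ and introduce the translated function $G(y):=F(x_0+y)$ defined on $\mathbf R^{n+1}_+$. Since left-monogenicity is translation-invariant, $G$ is left-monogenic on $\mathbf R^{n+1}_+$. The key claim, whose verification is the main step, is that $G\in H^p(\mathbf R^{n+1}_+)$. Granting the claim, Theorem \ref{Necessary-condition} applied to $G$ yields $(G^{*},\hat\psi)=0$ for the NTBL $G^{*}$ of $G$ and every $\psi\in\Psi^-(\mathbf R^n)$. Interior continuity of $F$ on $\mathbf R^{n+1}_+$ forces $G(y_0,\ulx)=F(x_0+y_0,\ulx)\to F(x_0,\ulx)$ as $y_0\to 0^+$, so the Poisson representation of $G$ identifies $G^{*}(\ulx)=F(x_0+\ulx)$ almost everywhere, which gives the desired identity $(F(x_0+\cdot),\hat\psi)=0$.

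To establish $G\in H^p(\mathbf R^{n+1}_+)$, I intend to exploit subharmonicity of $|F|^p$. Writing $F=\sum_T f_T\bfe_T$ with each $f_T$ harmonic, the scalar function $|F|^2=\sum_T|f_T|^2$ is subharmonic because $\Delta|f_T|^2=2|\nabla f_T|^2\geq 0$; composing with the convex non-decreasing map $t\mapsto t^{p/2}$ on $[0,\infty)$ (which requires $p\geq 2$, consistent with the hypothesis $p>2$) shows that $|F|^p$ is subharmonic on $\mathbf R^{n+1}_+$. Applying the sub-mean-value inequality on the ball $B_{z_0/2}(z_0,\ulx)\subset\mathbf R^{n+1}_+$, integrating in $\ulx\in\mathbf R^n$, and switching the order of integration via Fubini produces, after the standard slice computation,
\[\int_{\mathbf R^n}|F(z_0,\ulx)|^p\,d\ulx\leq \frac{C_n}{z_0}\int_{z_0/2}^{3z_0/2}\int_{\mathbf R^n}|F(s,\uly)|^p\,d\uly\,ds\leq \frac{C_n}{z_0}\|F\|_{A^p}^p.\]
Substituting $z_0=x_0+y_0$ yields $\|G(y_0,\cdot)\|_{L^p}^p\leq (C_n/x_0)\|F\|_{A^p}^p$ uniformly in $y_0>0$, so $G\in H^p(\mathbf R^{n+1}_+)$ as claimed.

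The main obstacle is precisely the subharmonicity-plus-Fubini estimate in the previous paragraph, which converts $A^p$-integrability of $F$ into uniform $H^p$-control on the translate $G$; the identification of the NTBL with $F(x_0+\cdot)$ and the final invocation of Theorem \ref{Necessary-condition} are then immediate consequences of material already established in the paper.
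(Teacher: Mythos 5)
Your proposal is correct and follows essentially the same route as the paper: the paper also translates by $x_0$, shows via the sub-mean-value property of $|F|^p$ and Fubini that each translate $F(y_0+\cdot)$ lies in $H^p(\mathbf R^{n+1}_+)$ (this is the estimate established in the proof of Theorem \ref{bergman-2}), and then invokes Theorem \ref{Necessary-condition}. Your explicit justification of subharmonicity of $|F|^p$ for $p>2$ by composing the subharmonic function $|F|^2$ with the convex increasing map $t\mapsto t^{p/2}$ is a correct filling-in of a detail the paper leaves implicit, but the overall argument is the same.
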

\noindent{\textbf{Proof of Theorem \ref{bergman-2}}}\\
Since $F$ is left-monogenic, $|F|^p$ is subharmonic for $1\leq p\leq 2$. We thus have
\begin{align*}
|F(x)|^p\leq \frac{1}{V_{\delta}}\int_{B_{x}(\delta)}|F(y_0+\uly)|^p dy,
\end{align*}
where $B_{x}(\delta)$ is the ball centered at $x$ with radius $0<\delta<x_0$ (for instance, let $\delta=\frac{x_0}{2}$), $V_\delta$ is the volume of $B_{x}(\delta).$ Then we have
\begin{align*}
|F(x)|^p &\leq \frac{C}{\delta^{n+1}}\int_{x_0-\delta}^{x_0+\delta}\int_{|\ulx-\uly|<\delta}|F(y_0+\uly)|^p d\uly dy_0\\
&\leq \frac{C}{\delta^{n+1}}\int_{0}^{\infty}\int_{|\ulx-\uly|<\delta}|F(y_0+\uly)|^p d\uly dy_0,
\end{align*}
and then,
\begin{align*}
\int_{\mathbf R^n}|F(x_0+\ulx)|^p d\ulx
&\leq \frac{C}{\delta^{n+1}}\int_{\mathbf R^n}\int_{0}^{\infty}\int_{|\ulx-\uly|<\delta}|F(y_0+\uly)|^p d\uly dy_0 d\ulx\\
&=\frac{C}{\delta^{n+1}}\int_{0}^{\infty}\int_{\mathbf R^n}\int_{\mathbf R^n}\chi_{B_{\uly}(\delta)}(\ulx)d\ulx|F(y_0+\uly)|^p d\uly dy_0\\
&=\frac{C^\prime\delta^n}{\delta^{n+1}}\int_0^\infty \int_{\mathbf R^n} |F(y_0+\uly)|^p d\uly dy_0\\
&=\frac{C^\prime}{\delta}\int_0^\infty \int_{\mathbf R^n} |F(y_0+\uly)|^p d\uly dy_0
\end{align*}
where the second equality used Fubini's theorem, and $C$ and $C^\prime$ are constants. The above inequality implies that $F_{y_0}(x)=F(y_0+x)\in H^p(\mathbf R^{n+1}_+)$ for $F\in A^p(\mathbf R^{n+1}_+)$ and $y_0>0.$ Consequently, we have
\begin{align*}
F_{y_0}(x)&=\int_{\mathbf R^n} e^+(x,\ulxi)\hat F_{y_0}(\ulxi)d\ulxi
\end{align*}
and
\begin{align*}
F_{y_0}(x)=\int_{\mathbf R^n}P_{x_0}(\ulx-\uly)F_{y_0}(\uly)d\uly.
\end{align*}
Thus
\begin{align*}
\hat F_{y_0+x_0}(\ulxi)=e^{-2\pi x_0|\ulxi|}\hat F_{y_0}(\ulxi)
\end{align*}
and hence
\begin{align*}
e^{2\pi(y_0+x_0)|\ulxi|}\hat F_{y_0+x_0}(\ulxi)=e^{2\pi y_0|\ulxi|}\hat F_{y_0}(\ulxi).
\end{align*}
Therefore, if we let $G(\ulxi)=e^{2\pi y_0|\ulxi|}\hat F_{y_0}(\ulxi),$ which is independent of $y_0,$ then we have
\begin{align*}
F_{y_0}(x)&=\int_{\mathbf R^n}e^+(x,\ulxi)e^{-2\pi y_0|\ulxi|}G(\ulxi)d\ulxi\\
&=\int_{\mathbf R^n}e^+(y_0+x,\ulxi)G(\ulxi)d\ulxi.
\end{align*}
Consequently,
\begin{align*}
F(x)&=\int_{\mathbf R^n}e^+(x,\ulxi)G(\ulxi)d\ulxi.
\end{align*}
Moreover, by Hausdorff-Young's inequality, we have, for $1<p\leq 2,$
\begin{align*}
\left(\int_{\mathbf R^n}|\chi_+(\ulxi)G(\ulxi)|^q e^{-2\pi x_0q|\ulxi|} d\ulxi\right)^\frac{1}{q}\leq \left(\int_{\mathbf R^n}|F(x_0+\ulx)|^p d\ulx\right)^\frac{1}{p},
\end{align*}
and for $p=1,$
\begin{align*}
\sup_{\ulxi\in \mathbf R^n}|\chi_+(\ulxi)G(\ulxi)| e^{-2\pi x_0|\ulxi|}\leq\int_{\mathbf R^n}|F(x_0+\ulx)| d\ulx.
\end{align*}
For $1<p\leq 2,$ by Minkowski's inequality,
\begin{align*}
\left(\int_{\mathbf R^n} \left(\int_0^\infty |\chi_+(\ulxi)G(\ulxi)|^p e^{-2\pi x_0p|\ulxi|}dx_0\right)^\frac{q}{p}d\ulxi\right)^\frac{p}{q}&\leq \int_{0}^\infty \left(\int_{\mathbf R^n}|\chi_+(\ulxi)G(\ulxi)|^q e^{-2\pi x_0q|\ulxi|}d\ulxi\right)^\frac{p}{q} dx_0\\
&\leq\int_0^\infty\int_{\mathbf R^n}|F(x_0+\ulx)|^p d\ulx dx_0,
\end{align*}
and thus
\begin{align*}
\left(\int_{\mathbf R^n}\frac{|\chi_+(\ulxi)G(\ulxi)|^q}{(2\pi p|\ulxi|)^\frac{q}{p}}\right)^\frac{1}{q}\leq \left(\int_0^\infty\int_{\mathbf R^n}|F(x_0+\ulx)|^p d\ulx dx_0\right)^\frac{1}{p}<\infty.
\end{align*}
For $p=1,$
\begin{align*}
\sup_{\ulxi\in \mathbf R^n}\frac{|\chi_+(\ulxi)G(\ulxi)|}{2\pi|\ulxi|}\leq\int_0^\infty\int_{\mathbf R^n}|F(x_0+\ulx)| d\ulx dx_0<\infty.
\end{align*}

\quad \hfill$\Box$\vspace{2ex}

\noindent{\textbf{Proof of Theorem \ref{bergman-p}}}\\
As in the proof of Theorem \ref{bergman-2}, we can show that $F_{y_0}(x)\in H^p(\mathbf R^{n+1}_+)$ for $F\in A^p(\mathbf R^{n+1}_+),2<p<\infty$ and $y_0>0.$ Then, by Theorem \ref{Necessary-condition} we complete the proof.
\quad \hfill$\Box$\vspace{2ex}

\bigskip
\end{document}